\declaretheorem[name=Theorem, parent=section]{thm}
\declaretheorem[name=Definition, sibling=thm]{defn}
\declaretheorem[name=Proposition, sibling=thm]{prop}
\declaretheorem[name=Corollary, sibling=thm]{cor}
\declaretheorem[name=Lemma, sibling=thm]{lem}
\declaretheorem[name=Example, style=definition, sibling=thm]{example}
\declaretheorem[name=Remark, style=definition, numbered=no]{rem}
\newcommand{\mf}{\mathfrak}
\newcommand{\mc}{\mathcal}
\newcommand{\ms}{\mathsf}
\newcommand{\on}{\operatorname}
\newcommand{\la}{\langle}
\newcommand{\ra}{\rangle}
\newcommand{\R}{\mathbb{R}}
\newcommand{\Z}{\mathbb{Z}}
\newcommand{\w}[1]{\wedge^{\!#1}\,}
\renewcommand{\dh}{\hat d\hspace{-.5mm}}
\tikzset{->-/.style={decoration={
  markings,
  mark=at position #1 with {\arrow{stealth'}}},postaction={decorate}}}
\tikzset{proj_empty/.style={circle,fill=white,inner sep=0pt}}
\tikzset{proj0/.style={circle,fill=white,draw=black,inner sep=0pt}}
\tikzset{proj1/.style={circle,fill=black,draw=black,inner sep=0pt}}
\tikzset{proj2/.style={circle,fill=blue,draw=blue,inner sep=0pt}}
\tikzset{proj3/.style={circle,fill=cyan,draw=cyan,inner sep=0pt}}
\title[G-algebroids]{G-algebroids: a unified framework for exceptional and generalised geometry, and Poisson--Lie duality}
\author{Mark Bugden}
\address{Mathematical Institute, Faculty of Mathematics and Physics\\Charles University Prague 186 75, Czech Republic}
\email{mathphys@mark-bugden.com}
\author{Ond\v rej Hul\' ik}
\address{Institute of Physics of the Czech Academy of Sciences, CEICO, Na Slovance 2, 182 21 Prague 8, Czech Republic
/
Institute of Particle Physics and Nuclear Physics, Faculty of Mathematics and Physics, Charles University,  V Hole\v{s}ovi\v{c}k\'{a}ch 2, 180 00 Prague 8, Czech Republic}
\email{ondra.hulik@gmail.com}
\author{Fridrich Valach}
\address{Department of Physics, Imperial College London\\Prince Consort Road, London, SW7 2AZ, UK}
\email{fridrich.valach@gmail.com}
\author{Daniel Waldram}
\address{Department of Physics, Imperial College London\\Prince Consort Road, London, SW7 2AZ, UK}
\email{d.waldram@imperial.ac.uk}
\begin{document}
\begin{textblock}{5}(14,2.5)
\noindent Imperial/TP/21/DW/1
\end{textblock}

\maketitle

\begin{abstract}
        We introduce the notion of $\ms G$-algebroid, generalising both Lie and Courant algebroids, as well as the algebroids used in $\ms E_{n(n)}\times\R^+$ exceptional generalised geometry for $n\in\{3,\dots,6\}$. Focusing on the exceptional case, we prove a classification of ``exact'' algebroids and translate the related classification of Leibniz parallelisable spaces into a tractable algebraic problem. After discussing the general notion of Poisson--Lie duality, we show that the Poisson--Lie U-duality is compatible with the equations of motion of supergravity.
\end{abstract}

\section{Introduction}
\subsection{String theory and Courant algebroids}
    When studying various aspects of string theory, Courant algebroids \cite{LWX} provide an invaluable tool. They can be seen as a ``many-points'' generalisation of quadratic Lie algebras (Lie algebras equipped with an inner product).\footnote{In a similar fashion, Lie algebroids can be seen as a ``many-points'' generalisation of Lie algebras.} More precisely, a Courant algebroid is given by a vector bundle $E\to M$ together with some extra structure, most notably a bracket on the space of sections. An important class is given by the so-called \emph{exact} Courant algebroids. These have $E\cong TM\oplus T^*M$ and are classified by $H^3(M)$, corresponding to the class of the 3-form flux in string theory \cite{Severa1,Severa2}. 
    
    More generally, one can encode all the bosonic NSNS field content of 10-dimensional type II supergravities by means of a \emph{generalised metric} on an exact Courant algebroid, and then describe the corresponding dynamics in terms of a suitable Ricci tensor and scalar curvature \cite{Siegel,HHZ,JLP,CSCW0}.\footnote{The RR fields can be seen as spinors w.r.t.\ the Courant description and thus they also fit into this framework, or alternatively as part of the generalised metric in exceptional generalised geometry. Their role in Poisson--Lie T-duality was elucidated in \cite{Falk}.} Extending to M-theory, the symmetries of eleven-dimensional supergravity on a $n$-dimensional manifold $M$, define an \emph{exceptional generalised geometry} \cite{Hull,PW,CSCW} in terms of a particular type of Leibniz algebroid \cite{Baraglia}. Again the bosonic fields define a generalised metric and the dynamics are encoded by the vanishing of a suitable Ricci tensor \cite{CSCW}.
    
\subsection{Poisson--Lie T-duality}
    Furthermore, the phenomenon of Poisson--Lie T-duality \cite{KS} turns out to be inherently linked with Courant algebroids \cite{Severa1}. This duality, which can be seen as a non-abelian generalisation of the usual stringy T-duality, relates two (or more) different string backgrounds, each described by an exact Courant algebroid with a generalised metric. In order for these algebroids to be Poisson--Lie dual to one another, they must be both \emph{pullbacks} of the same non-exact Courant algebroid. 
    
    Consequently, in order to fully understand the Poisson--Lie T-duality and its relation to supergravity, it was necessary to extend the relevant concepts (e.g.\ the curvature tensors) to the non-exact Courant setup \cite{GF,GF2,JV,SV0}. This result provided a simple proof \cite{SV0,SV} of the compatibility of the duality with string background equations, extending the result of \cite{VKS}. In addition, a method for searching for new solutions to these equations was developed.
    
    Finally, let us emphasise that the Poisson--Lie T-duality and its cousins studied in the present work are to be understood (in most Sections) in the most general sense of the word, i.e.\ including the \emph{spectator coordinates}. In the T-duality case this represents a vast generalisation of the most studied class of examples given by pairs of Poisson--Lie groups.
    
\subsection{Poisson--Lie U-duality}
    The $n$-torus compactifications of M-theory exhibit a U-duality symmetry, which features the split real forms of exceptional Lie groups of rank $n$ (as opposed to the split real form of the orthogonal group in the T-duality case). A Poisson--Lie-type generalisation of U-duality was first proposed and investigated in the case without spectators in \cite{Sakatani, MT}. One of the goals of this paper is to describe this phenomenon in the language of algebroids, allowing the employment of techniques and strategies known from Courant algebroids. In particular, this will involve defining a suitable non-exact generalisation of the algebroids that appear in exceptional generalised geometry.  
    
\subsection{Summary of results}
    In the present work we introduce a general framework of $\ms G$-algebroids, tailored for the study of dualities and related topics, such as Leibniz (or generalised) parallelisations. In addition to recovering the algebroids (up to $n=6$) used in exceptional generalised geometry, we recover Lie and Courant algebroids, and the algebroids in \cite{LSCW}. In each case we formulate the appropriate notion of Poisson--Lie duality.
    
    Focusing then on the exceptional case, we prove a classification result for exact algebroids (of ``M-theoretic type'') and reduce the classification of Leibniz parallelisable spaces to a quite simple algebraic problem. It should be noted that the latter essentially mirrors a result, derived using different methods, by Inverso \cite{Inverso} (see also \cite{BHL} for the $n=4$ case). We then provide a simple proof of the compatibility of the Poisson--Lie U-duality (in the general case with spectators) with the bosonic part of the equations of motion of the relevant supergravity (see Section \ref{sec:sugra}).
    
    It should also be emphasised that the presented framework is entirely geometric and avoids the need of an explicit coordinate description. Furthermore, it provides a natural language for the study of dualities at the level of algebroids, without the need for extending the spacetime.
    
\subsection{Outline of the paper}
    The paper is structured as follows. We start by discussing the general types of ``geometries'' (e.g.\ exceptional, Courant, etc.), encoded in an \emph{admissible group data set}, introduce (generalised) isotropic and coisotropic subspaces and provide several examples. We then define $\ms G$-algebroids and in particular exceptional algebroids, and discuss examples thereof together with some classification results in the exact case. Proceeding to pullbacks, we prove an important theorem concerning the construction of exceptional algebroids, and then turn to the related topic of Leibniz parallelisations. After discussing the general concept of Poisson--Lie duality, we again restrict our attention to the exceptional case, show how several simple examples from the literature fit into the present framework. We then prove the compatibility of the duality and supergravity equations of motion. Some technical details and proofs concerning the exceptional case are moved to the Appendix.
    
\subsection{Notation}
    We will denote Lie groups by $\ms G, \ms K, \ms{GL}(n,\R),\dots$, and their corresponding Lie algebras by $\mf g,\mf k,\mf{gl}(n,\R),\dots$ We will keep the same symbols $E,N,\dots$ for both group representation spaces and the corresponding associated bundles. The annihilator of a subspace $V\subset W$ will be denoted by $V^\circ\subset W^*$, the pairing of vectors with covectors by $\la\cdot,\cdot\ra$, and the transpose of a linear map by a superscript $t$. We shall write $S^2V$ for the second symmetric tensor power of a vector space $V$.
    
    In the text, we will be often working with maps $S^2E\to N$, $N\to S^2 E$, and their duals. For the sake of clarity we will not give these maps specific names, but will instead refer to them, and their (partial) duals, by a subscript -- for example, the image of a $\xi\otimes n$ under the map $E^*\otimes N\to E$ will be denoted by $(\xi\otimes n)_{E}$ (this map can be seen as the composition of $E^*\otimes N\to E^*\otimes S^2E \to E^*\otimes E\otimes E$ with the contraction of the first two terms).
    
\subsection{Outlook and future prospects}
    The present work opens the door for further investigations in the area of Poisson--Lie dualities or exceptional generalised geometry and its cousins. A natural question is the extension of the results to the case $n=7$ and beyond, as well as to geometries given by other groups, such as the $\ms{Spin}(n,n)\times\R^+$ of \cite{CSC}. One can also examine possible reformulations of the framework in terms of $L_\infty$-algebroids or dg manifolds, making a connection to the works \cite{Baraglia, Arvanitakis}. Furthermore, using the results of Section \ref{sec:class} one can try to perform a classification of Leibniz parallelisations (which in turn correspond to maximal consistent truncations \cite{LSCW}), or search for new Poisson--Lie U-dual backgrounds. A detailed study of these issues is left to later works.
    
\subsection{Acknowledgements}
    The authors would like to thank Florian Naef, Jan Slovák, Charles Strickland-Constable, and Pavol Ševera for helpful discussions and comments. The authors would also like to acknowledge the Higher Structures and Field Theory programme at the Erwin Schr\" odinger Institute in Vienna, Austria, in September 2020, where part of the research was carried out. Research of M.B. was supported by the GA\v{C}R Grant EXPRO 19-28628X. Research of O.H. was supported by the GA\v{C}R Grant EXPRO 20-25775X. F.V.\ was supported by the Early Postdoc Mobility grant P2GEP2\underline{\phantom{k}}188247 of the Swiss National Science Foundation. D.W.\ was supported in part by the STFC Consolidated Grants ST/P000762/1 and ST/T000791/1 and the EPSRC New Horizons Grant EP/V049089/1.
    
\section{Admissible group data set}
\subsection{Admissible group data set}
    There is a certain algebraic pattern underlying several types of ``geometries'', for example the geometry of Lie algebroids \cite{Pradines}, Courant algebroids \cite{LWX}, or the Leibniz algebroids appearing in the description of exceptional generalised geometry \cite{PW,Baraglia,CSCW}. Generalising this pattern, we introduce the concept of an \emph{admissible group data set}. For simplicity, we divide the definition in two parts.
    \begin{defn}
        A \emph{group data set} is given by a reductive (real) Lie group $\ms G$, a faithful representation $E$ of $\ms G$, a decomposition $S^2E=N\oplus\hat N$ into subrepresentations, and a map $N\to S^2 E$ proportional to the embedding.
    \end{defn} 
    Throughout the text we will use the following notation. By the map $N\to S^2E$ we will always mean the one from the definition, while $S^2E\to N$ will always be taken to be the projection w.r.t.\ $\hat N$.
    We also define a map $\pi'\colon \on{End}(E)\to \on{End}(E)$ to be the partial dual of the composition
    \[E\otimes E\to S^2 E\to N \to S^2 E\to E\otimes E,\] and we set $\pi:=1-\pi'$.
    \begin{defn}\label{def:grpdata}
        A group data set is \emph{admissible} if $\pi(\on{End}(E))\subset \mf g$.
    \end{defn}
    \begin{rem}
        This condition is based on a pattern common to various ``types'' of geometries (c.f.\ \cite{CSCW} and \cite{BCKT}). Note that  the components of the map $\pi'$ are usually denoted by $Y^{ij}_{\hphantom{ij}kl}$ in the exceptional field theory literature.
    \end{rem}
    \begin{lem}
        An admissible group data set is fully determined by specifying $\ms G$ and the decomposition $S^2E=N\oplus \hat N$ (i.e.\ the embedding $N\to S^2E$ is fixed), unless $\mf g\cong\mf{gl}(E)$ and $\hat N=0$. 
    \end{lem}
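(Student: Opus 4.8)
The plan is to observe that, once $\ms G$, $E$ and the decomposition $S^2E=N\oplus\hat N$ are fixed, the only freedom left in a group data set is a single scalar $\lambda$: the structure map $N\to S^2E$ must equal $\lambda\,j$, where $j\colon N\hookrightarrow S^2E$ is the canonical inclusion. So it suffices to show that admissibility determines $\lambda$, except in the stated case.

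First I would note that the composition $E\otimes E\to S^2E\to N\to S^2E\to E\otimes E$ from the definition of $\pi'$ depends linearly on $\lambda$: it equals $\lambda\,C$, where $C$ is the same composition with $j$ substituted for $\lambda j$ (concretely, $C$ is the projector of $E\otimes E$ onto $N\subseteq S^2E\subseteq E\otimes E$ along $\hat N\oplus\w2 E$, though this will not be needed). Passing to partial duals gives $\pi'=\lambda\,\pi'_1$ and hence $\pi=\id-\lambda\,\pi'_1$, with $\pi'_1\in\on{End}(\on{End}(E))$ the partial dual of $C$, independent of $\lambda$. Regarding $\mf g\subseteq\mf{gl}(E)=\on{End}(E)$ via the faithful representation, admissibility $\pi(\on{End}(E))\subseteq\mf g$ is equivalent to $\pi'(x)-x\in\mf g$ for all $x\in\on{End}(E)$. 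Picking any vector-space complement $\mf m$ of $\mf g$ in $\on{End}(E)$, with projection $P\colon\on{End}(E)\to\mf m$, this says $P\circ\pi'=P$, so, restricting to $\mf m$, $\lambda\,(P\circ\pi'_1)|_{\mf m}=\id_{\mf m}$.

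Now if $\mf g\neq\mf{gl}(E)$ then $\mf m\neq 0$, so $\id_{\mf m}\neq 0$; hence $(P\circ\pi'_1)|_{\mf m}\neq 0$, $\lambda\neq 0$, and any two admissible values $\lambda,\lambda'$ satisfy $(\lambda-\lambda')(P\circ\pi'_1)|_{\mf m}=0$ and therefore coincide, so $\lambda$ — and with it the whole data set — is determined. If instead $\mf g=\mf{gl}(E)$: when $\hat N\neq 0$, irreducibility of $S^2E$ as a $\mf{gl}(E)$-module forces $N=0$, so the map $N\to S^2E$ is trivially zero and again determined; and when $\hat N=0$ we are in the exceptional case, where admissibility holds for every $\lambda$ since $\pi(\on{End}(E))\subseteq\on{End}(E)=\mf g$ automatically, so the data set is genuinely not determined.

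I do not expect a real obstacle: there is no computation, and the only point requiring care is the case split on $\mf g$ versus $\mf{gl}(E)$ — in particular recognising that uniqueness needs nothing beyond the linear dependence of $\pi$ on $\lambda$ (no reductivity, no Schur's lemma), and that the degeneracy is exactly $\mf g=\mf{gl}(E)$ with $\hat N=0$, where the admissibility constraint is vacuous.
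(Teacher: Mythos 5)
Your proof is correct and is essentially the paper's own argument in contrapositive form: the paper takes a suitable linear combination of the two $\pi$'s to produce $\on{id}$ with image in $\mf g$, forcing $\mf g\cong\mf{gl}(E)$, while you exploit the same linear dependence of $\pi$ on the scalar $\lambda$ by projecting onto a complement of $\mf g$ to pin $\lambda$ down whenever $\mf g\neq\mf{gl}(E)$. Your explicit appeal to the irreducibility of $S^2E$ as a $\mf{gl}(E)$-module in the degenerate case is exactly what the paper uses implicitly when it concludes $N=0$ or $\hat N=0$, so no genuinely different ideas are involved.
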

    \begin{proof}
        Given two different maps $N\to S^2E$ which both lead to $\pi(\on{End}(E))\subset \mf g$, we can take a suitable linear combination of the respective $\pi$'s to get that $\on{Im}(\on{id})=\on{End}(E)\subset \mf{g}$. Thus $\mf g\cong \mf{gl}(E)$, implying we have $N=0$ and $\hat N=S^2E$ (which makes $N\to S^2E$ trivial), or vice versa.
    \end{proof}
    
\subsection{Examples}\label{subsec:ex}
    We now provide a list of examples, given by reductive groups with semisimple parts given by split real forms of simply-laced semisimple Lie groups. We provide also the characterisation of representations (of the semisimple part) in terms of Dynkin diagrams, with $E$ corresponding to a black node and $N$ to a blue one. 
    Notice that in these examples, the choices of $\ms G$, $E$ and $N\subset S^2E$ determine $\hat N$ as well. Consequently, we will often refer to an admissible group data set simply as a triple $(\ms G,E,N)$.
    
    \begin{example}\label{ex:alg_ordinary}
        The simplest example consists of $\ms G=\ms{GL}(n,\R)$, $N=0$ and $E$ the vector representation. Later it will give rise to Lie algebroids. The diagram is
        \[\begin{tikzpicture}[scale=0.6, baseline=-0.6ex, thick]
            \draw (0,0) -- (1.6,0); \draw (2.4,0) -- (3,0); \filldraw [black] (1.8,-.2) circle (.5pt); \filldraw [black] (2,-.2) circle (.5pt); \filldraw [black] (2.2,-.2) circle (.5pt);
            \node at (0,0) [proj1] {\hphantom{+}}; \node at (1,0) [proj0] {\hphantom{+}}; \node at (3,0) [proj0] {\hphantom{+}};
        \end{tikzpicture}\qedhere\]
    \end{example}
    \begin{example}\label{ex:alg_onn}
        Take $\ms G=\ms{O}(n,n)$, with the vector representation $E= \R^{2n}$, and $N\cong\R$. This gives
        \[\begin{tikzpicture}[scale=0.6, baseline=-0.6ex, thick]
            \draw (0,0) -- (1.6,0); \draw (2.4,0) -- (3,0); \draw (3,0) -- (3.5,.5); \draw (3,0) -- (3.5,-.5);
            \filldraw [black] (1.8,-.2) circle (.5pt); \filldraw [black] (2,-.2) circle (.5pt); \filldraw [black] (2.2,-.2) circle (.5pt);
            \node at (0,0) [proj1] {\hphantom{+}}; \node at (1,0) [proj0] {\hphantom{+}}; \node at (3,0) [proj0] {\hphantom{+}}; \node at (3.5,.5) [proj0] {\hphantom{+}}; \node at (3.5,-.5) [proj0] {\hphantom{+}}; 
        \end{tikzpicture}\qedhere\]
        The induced map $S^2E\to \R$ corresponds to an inner product on $E$ of signature $(n,n)$. Clearly, the setup can be generalised to the $\ms{O}(p,q)$-case. This will correspond to Courant algebroids.
    \end{example}
    \begin{example}\label{ex:exc_data}
        Take $n\in\{3,\dots,6\}$ and let $\ms G=\ms E_{n(n)}\times\R^+$, corresponding to the split real form of the exceptional Lie algebra $\mf e_n$.\footnote{Strictly speaking, we only get an exceptional Lie algebra for $n=6$. The remaining cases are defined by following the pattern of Dynkin diagrams.} The details about the representations $E$, $N$ as well as the structure of the algebras can be found in the Appendix. Here it suffices to say that under the subalgebra $\mf{gl}(n,\R)\subset \mf e_{n(n)}\oplus \R$, defining $T:=\R^n$, we have 
        \[E\cong T\oplus \w{2}T^*\oplus \w{5}T^*,\quad N\cong T^*\oplus \w{4}T^*\oplus(T^*\otimes\w{6}T^*),\]
        while $\R^+$ acts on $E$ and $N$ with weights 1 and 2, respectively.
        Writing $u=X+\sigma_2+\sigma_5\in E$, the map $S^2E\to N$ is given by
        \[u\otimes u\mapsto 2\,i_X \sigma_2+(2\,i_X \sigma_5-\sigma_2\wedge\sigma_2)+2\,j\sigma_2\wedge\sigma_5,\]
        where $(j\sigma_2\wedge \sigma_5)(Y):=(i_Y\sigma_2)\wedge\sigma_5$ for $Y\in T$. The map $S^2E^*\to N^*$, which is dual to $N\to S^2E$, is given (up to a multiple) by an analogous formula. We shall refer to this data as the \emph{exceptional (admissible) group data set}. In terms of the Dynkin diagrams, we get
        \[\begin{tikzpicture}[scale=0.6, baseline=-0.6ex, thick]
        \draw (5,-1) -- (5,2);
        \draw (0,0) -- (4,0); \draw (2,1) -- (2,0); \node at (2,1) [proj0] {\hphantom{+}}; \node at (0,0) [proj1] {\hphantom{+}}; \node at (4,0) [proj2] {\hphantom{+}};
        \foreach \x in {1,...,3}
            \node at (\x,0) [proj0] {\hphantom{+}};
        \end{tikzpicture}\quad
        \begin{tikzpicture}[scale=0.6, baseline=-0.6ex, thick]
        \draw (4,-1) -- (4,2);
        \draw (0,0) -- (3,0); \draw (1,1) -- (1,0); \node at (1,1) [proj0] {\hphantom{+}}; \node at (0,0) [proj1] {\hphantom{+}}; \node at (3,0) [proj2] {\hphantom{+}};
        \foreach \x in {1,...,2}
            \node at (\x,0) [proj0] {\hphantom{+}};
        \end{tikzpicture}\quad
        \begin{tikzpicture}[scale=0.6, baseline=-0.6ex, thick]
        \draw (3,-1) -- (3,2);
        \draw (0,0) -- (2,0); \draw (0,1) -- (0,0); \node at (0,1) [proj0] {\hphantom{+}}; \node at (0,0) [proj1] {\hphantom{+}}; \node at (2,0) [proj2] {\hphantom{+}};
        \foreach \x in {1,...,1}
            \node at (\x,0) [proj0] {\hphantom{+}};
        \end{tikzpicture}\quad
        \begin{tikzpicture}[scale=0.6, baseline=-0.6ex, thick]
        \draw (1,0) -- (2,0); \node at (0,1) [proj1] {\hphantom{+}}; \node at (1,0) [proj1] {\hphantom{+}}; \node at (2,0) [proj2] {\hphantom{+}};
        \end{tikzpicture}\]
    \end{example}
    \begin{example}[\cite{LSCW}]
        Let $n\ge 2$. Consider $\ms G=\ms{SL}(n+1,\R)\times\R^+$, with $E=\w{2}\R^{n+1}$, $N=\w{4}\R^{n+1}$ and $\R^+$ acting with weights 1 and 2. If $n=2$, $n=3$, and $n=4$, we recover special cases of the first, second, and third examples, respectively. For $n>3$ this corresponds to
        \[\begin{tikzpicture}[scale=0.6, baseline=-0.6ex, thick]
            \draw (0,0) -- (3.6,0); \draw (4.4,0) -- (5,0); \filldraw [black] (3.8,-.2) circle (.5pt); \filldraw [black] (4,-.2) circle (.5pt); \filldraw [black] (4.2,-.2) circle (.5pt);
            \node at (0,0) [proj0] {\hphantom{+}}; \node at (1,0) [proj1] {\hphantom{+}}; \node at (2,0) [proj0] {\hphantom{+}}; \node at (3,0) [proj2] {\hphantom{+}}; \node at (5,0) [proj0] {\hphantom{+}};
        \end{tikzpicture}\qedhere\]
        The maps $S^2E\to N$ and $S^2E^*\to N^*$ are proportional to the wedge product. 
    \end{example}
    \begin{rem}
        The $\ms O(n,n)$ example above differs from the rest by not having an extra central factor. Even though it is this semisimple choice that gives rise to Courant algebroids, one can also consider the analogous $\ms O(n,n)\times\R^+$-geometry, as in \cite{CSCW0}. Physically, this has the advantage of treating the entire NSNS sector, including the dilaton, in a uniform way.
    \end{rem}
\subsection{Isotropy and coisotropy}
    We now proceed to the introduction of isotropic and coisotropic subspaces, which (especially the latter one) will be important in the subsequent sections. This will generalise the usual notions from Riemannian geometry.
    \begin{defn}
        We say that a subspace $V\subset E$ is \emph{isotropic} if $(V\otimes V)_N=0$.
        Similarly, we say that a subspace $V\subset E$ is \emph{coisotropic} if $(V^\circ\otimes V^\circ)_{N^*}=0$. A subspace $V\subset E$ is \emph{Lagrangian} if it is maximally isotropic (cannot be further enlarged). Similarly, a subspace $V\subset E$ is \emph{co-Lagrangian} if it is minimally coisotropic (has no proper coisotropic subalgebra).
    \end{defn}
    \begin{rem}
       In the language of double/exceptional field theory, the coisotropic subspaces correspond to solutions of the section constraint \cite{Siegel,HHZ,CSCW}. Note that not all (co-)Lagrangian subspaces of a given $E$ need to have the same dimension. For instance, as shown in Proposition \ref{prop:classcoiso}, in the exceptional case there are 2 possible co-Lagrangian subspaces (up to an isomorphism), corresponding to the M-theory and type IIB solutions of the section constraint.\footnote{The type IIA solutions correspond instead to certain non-minimally coisotropic subspaces.}
    \end{rem}
    \begin{example}
        In the $(\ms{GL}(n,\R),\R^n,0)$ case, any subspace is coisotropic, while the only co-Lagrangian subspace is $0$.
    \end{example}
    \begin{example}
        In the $(\ms O(n,n),\R^{2n},\R)$ case, the space $E=\R^{2n}$ is equipped with an inner product of signature $(n,n)$. The coisotropy (and isotropy) coincide with the usual notions, w.r.t.\ this structure; Lagrangian and co-Lagrangian subspaces are the same, and they are both half-dimensional.
    \end{example}
    \begin{example}\label{ex:exc_meta}
        In the $(\ms{SL}(n+1,\R)\times\R^+,\w{2}\R^{n+1},\w{4}\R^{n+1})$ case for $n>3$, there are precisely two types of co-Lagrangian subspaces:
        \begin{enumerate}
            \item $V=\w{2}U\subset E$, with $U\subset \R^{n+1}$ a subspace of codimension $1$ ($V$ has codimension $n$),
            \item $V=(\w{2}\Xi)^\circ\subset E$, with $\Xi\subset (\R^{n+1})^*$ of dimension $3$ ($V$ has codimension $3$).
        \end{enumerate}
    \end{example}
    \begin{lem}
        $V$ is coisotropic iff $(V^\circ\otimes N)_E\subset V$.
    \end{lem}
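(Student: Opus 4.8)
The plan is to show that each of the two conditions is equivalent to the vanishing of one and the same family of scalars, and then to invoke nondegeneracy of the natural pairings. Write $\beta\colon N\to S^2E$ for the structure map, so that $(\xi\otimes n)_E$ is obtained by inserting $\xi\in E^*$ into one slot of $\beta(n)\in S^2E\subset E\otimes E$, and $(\xi\otimes\eta)_{N^*}$ denotes the image of $\xi\otimes\eta$ under $E^*\otimes E^*\to S^2E^*\to N^*$, the second arrow being $\beta^t$. Since $E$ is finite-dimensional, $V=(V^\circ)^\circ$, and hence
\[(V^\circ\otimes N)_E\subset V\quad\Longleftrightarrow\quad\la\eta,(\xi\otimes n)_E\ra=0\ \text{ for all }\xi,\eta\in V^\circ,\ n\in N.\]
It therefore suffices to match the right-hand condition with coisotropy of $V$.

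The crux is the identity
\[\la\eta,(\xi\otimes n)_E\ra=c\,\la(\xi\otimes\eta)_{N^*},n\ra,\qquad \xi,\eta\in E^*,\ n\in N,\]
for a fixed nonzero constant $c$ (absorbing the proportionality factor in the definition of $\beta$ and the symmetrisation conventions; if that factor is zero both sides of the lemma hold vacuously). This is proved by unwinding the definitions from the Notation subsection: the left-hand side contracts $\xi$ into the first slot of $\beta(n)$ and then pairs the result with $\eta$, i.e.\ it is the total contraction of $\xi\otimes\eta$ against $\beta(n)\in E\otimes E$; the right-hand side is $c\,\la\beta^t(\xi\odot\eta),n\ra=c\,\la\xi\odot\eta,\beta(n)\ra$ by adjunction, which is the same total contraction because $\beta(n)$ is symmetric (so it does not matter which of $\xi,\eta$ goes into which slot, and passing from $\xi\otimes\eta$ to $\xi\odot\eta$ only changes things by a constant). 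No representation theory is needed here; one only has to keep the slot bookkeeping consistent with the conventions fixed earlier.

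Granting the identity, we finish as follows. For fixed $\xi,\eta\in V^\circ$, the scalar $\la(\xi\otimes\eta)_{N^*},n\ra$ vanishes for every $n\in N$ iff $(\xi\otimes\eta)_{N^*}=0$, by nondegeneracy of the pairing between $N$ and $N^*$; and the vanishing of $(\xi\otimes\eta)_{N^*}$ for all $\xi,\eta\in V^\circ$ is precisely the condition $(V^\circ\otimes V^\circ)_{N^*}=0$ defining coisotropy. Combining this with the first display yields the claimed equivalence. The main (and essentially only) obstacle is the verification of the displayed identity, which is a matter of correctly matching the partial dual used in $(\xi\otimes n)_E$ with the full dual used in $(\xi\otimes\eta)_{N^*}$ via the symmetry of $\beta$; the rest is formal.
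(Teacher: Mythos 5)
Your proof is correct and follows essentially the same route as the paper: the paper's one-line argument is exactly the chain $(V^\circ\otimes V^\circ)_{N^*}=0\iff\la V^\circ\otimes V^\circ,N\ra=0\iff\la (V^\circ\otimes N)_E,V^\circ\ra=0$, i.e.\ the rebracketing of the total contraction against $\beta(n)$ plus nondegeneracy of the $N$--$N^*$ pairing and $V=(V^\circ)^\circ$, which is what you spell out in detail (including the harmless proportionality constant).
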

    \begin{proof}
        $(V^\circ \otimes V^\circ)_{N^*}=0\iff \la V^\circ \otimes V^\circ,N\ra=0\iff \la (V^\circ\otimes N)_E,V^\circ\ra=0$.\qedhere
    \end{proof}
    \begin{lem}\label{lem:colagexc}
         For the exceptional group data set, $(V^\circ\otimes N)_E=V$ iff $V$ is co-Lagrangian. 
    \end{lem}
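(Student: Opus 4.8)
The plan is to split the biconditional. One implication is formal — indeed it holds for any admissible group data set — while the other is where the representation theory of $\mf e_{n(n)}$ genuinely enters.

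\textit{Direction $(V^\circ\otimes N)_E=V\Rightarrow V$ co-Lagrangian.} Since $(V^\circ\otimes N)_E=V\subseteq V$, the preceding lemma immediately gives that $V$ is coisotropic. For minimality, suppose $V'\subsetneq V$ were coisotropic; then $V^\circ\subseteq (V')^\circ$, hence $(V^\circ\otimes N)_E\subseteq((V')^\circ\otimes N)_E\subseteq V'$, the last inclusion again by the preceding lemma. Thus $V=(V^\circ\otimes N)_E\subseteq V'\subsetneq V$, a contradiction. Nothing about the exceptional case is used here.

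\textit{Direction $V$ co-Lagrangian $\Rightarrow(V^\circ\otimes N)_E=V$.} Put $W:=(V^\circ\otimes N)_E$; coisotropy of $V$ together with the preceding lemma gives $W\subseteq V$, so only $V\subseteq W$ remains. I would first record, using the identity $\la(\xi\otimes n)_E,\eta\ra=\la(\xi\otimes\eta)_{N^*},n\ra$ and non-degeneracy of the $N^*\times N$ pairing, that $W^\circ=\{\,\eta\in E^*:(\xi\otimes\eta)_{N^*}=0\ \text{for all}\ \xi\in V^\circ\,\}$, the ``$N^*$-orthogonal complement'' of $V^\circ$; in particular $V^\circ\subseteq W^\circ$. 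Since $V$ is co-Lagrangian, $V^\circ$ is a \emph{maximal} subspace of $E^*$ annihilated by $S^2E^*\to N^*$, so it would suffice to show $W^\circ$ too is annihilated by $S^2E^*\to N^*$, i.e.\ that $W$ is coisotropic; maximality then forces $W^\circ=V^\circ$, that is $W=V$. Unwinding the definitions, however, shows ``$W$ coisotropic'' is actually \emph{equivalent} to $W=V$, so this gap cannot be closed by formal manipulation alone.

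The real content is therefore a structural fact about the exceptional group data set — \emph{the $N^*$-orthogonal complement of a maximal $N^*$-isotropic subspace of $E^*$ is again $N^*$-isotropic} — and this is the step I expect to be the main obstacle. I would establish it via the classification of co-Lagrangian subspaces in the exceptional case (Proposition \ref{prop:classcoiso}): up to the $\ms E_{n(n)}\times\R^+$-action there are only two, the ``M-theory'' subspace $V=\w2 T^*\oplus\w5 T^*$ and the ``type IIB'' one; since $V\mapsto(V^\circ\otimes N)_E$ is $\ms G$-equivariant it is enough to check $(V^\circ\otimes N)_E=V$ on one representative of each orbit. For the M-theory representative one has $V^\circ=T^*\subseteq E^*$, and the explicit dual map $S^2E^*\to N^*$ of Example \ref{ex:exc_data} shows that on $T^*\otimes N$ the contraction $(\,\cdot\,)_E$ is $\xi\otimes(n_1+n_4+n_6)\mapsto\xi\wedge n_1+\xi\wedge n_4\in\w2 T^*\oplus\w5 T^*$, with vanishing $T$-component, whose image is all of $\w2 T^*\oplus\w5 T^*=V$; the type IIB representative is handled the same way in the adapted parabolic. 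These are routine wedge-product computations, the only care needed being for the degenerate summands (e.g.\ $\w5 T^*=0$ when $n<5$); what cannot be bypassed is the classification input — or, equivalently, a direct computation with the $Y$-tensor of $\mf e_{n(n)}$ — since the isotropy of $N^*$-orthogonal complements of maximal isotropics is special to these algebras and not a formal consequence of the admissible-group-data-set axioms.
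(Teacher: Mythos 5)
Your proposal is correct and follows essentially the same route as the paper: the implication $(V^\circ\otimes N)_E=V\Rightarrow V$ co-Lagrangian by the same formal monotonicity argument, and the converse by an explicit check of representatives using the classification of (co-)Lagrangian subspaces from the appendix. The only difference is that you actually spell out the M-theory computation (and the reduction to the structural fact about $N^*$-orthogonal complements) which the paper leaves as "an explicit check," so no gap.
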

    \begin{proof}
        Supposing $(V^\circ\otimes N)_E=V$, $V$ is clearly coisotropic. If there is a proper coisotropic subspace $V'\subsetneq V$, then $(V^\circ\otimes N)_E\subset (V'^\circ\otimes N)_E\subset V' \subsetneq V$. The other direction follows from an explicit check, using the classification of (co-)Langrangian subspaces in Appendix \ref{ap:class}.
    \end{proof}
\section{\texorpdfstring{$\ms G$}{G}-algebroids}

Let us now define the algebroid structure that we will use to unifies the study the exceptional and other geometries.\footnote{Strictly speaking, we shall also assume a choice of a $\ms G$-structure (i.e.\ a set of $\ms G$-related frames) on the representation $E$. (Note that because of the construction, all of the above examples have a natural such structure.) This will induce a $\ms G$-structure on the corresponding associated bundle.}

\begin{defn}\label{def:galg}
    Fix an admissible group data set. A \emph{$\ms G$-algebroid} consists of a principal $\ms G$-bundle over $M$ together with the following structure on the associated vector bundles $E\to M$ and $N\to M$:
    \begin{itemize}
        \item an $\R$-linear bracket $[\cdot,\cdot]\colon \Gamma(E)\times \Gamma(E)\to \Gamma(E)$
        \item a vector bundle map $\rho\colon E\to TM$ (the \emph{anchor})
        \item an $\R$-linear operator $\mc D\colon \Gamma(N)\to\Gamma(E)$
    \end{itemize}
    such that for any $u,v,w\in\Gamma(E)$, $n\in\Gamma(N)$, $f\in C^\infty(M)$,
    \begin{align}
        [u,[v,w]]&=[[u,v],w]+[v,[u,w]]\vphantom{\dh} \label{eq:jacobi}\\
        [u,fv]&=f[u,v]+(\rho(u)f)v \vphantom{\dh} \label{eq:anchor}\\
        [u,v]+[v,u]&=\mc D (u\otimes v)_N \vphantom{\dh} \label{eq:sym}\\
        \mc D(fn)&=f\mc D n+(\dh f\otimes n)_E, \label{eq:diff}
    \end{align}
    where $\dh:=\rho^t \circ d\colon C^\infty(M)\to \Gamma(E^*)$, and the action $[u,\cdot]$ preserves the $\ms G$-structure.
    
    A $\ms G$-algebroid with $M=\text{pt}$ is called a \emph{$\ms G$-algebra}.
\end{defn}
    The last condition (the bracket preserving the $\ms G$-structure), is to be understood as follows. Condition \eqref{eq:anchor} implies that any $u$ gives rise to a vector field on the total space $E$, which projects onto $\rho(u)$. Lifting this vector field to the frame bundle of $E$, the condition asks that it preserves the given $\ms G$-subbundle. In other words, for any trivialisation by a $\ms G$-frame we have that the vertical part of $[u,\cdot]$ acts as an element in the adjoint representation.
    
    In particular this means that the action $[u,\cdot]$ can be extended to other bundles associated to representations of $\ms G$, in particular to tensor powers of $E$ and their duals. For instance, for $v,w\in\Gamma(E)$ and $\xi\in\Gamma(E^*)$, we have 
    \[[u,v\otimes w]=[u,v]\otimes w+v\otimes[u,w],\qquad \rho(u)\la\xi,v\ra=\la[u,\xi],v\ra+\la \xi,[u,v]\ra.\]
    
    Notice also that since the map $S^2 E\to N$ is surjective, the operator $\mc D$ is uniquely determined by condition \eqref{eq:sym}. 
\begin{lem}\label{lem:basic}
     For a $\ms G$-algebroid we have for all $u,v\in\Gamma(E)$, $n\in\Gamma(N)$, $f\in C^\infty(M)$
     \begin{align}
        \rho([u,v])&=[\rho(u),\rho(v)],\vphantom{\dh}\tag{a}\label{eq:morphism}\\
        [\mc D n,u]&=0,\tag{b}\vphantom{\dh}\label{eq:lcentre}\\
        \rho\circ \mc D&=0,\vphantom{\dh}\quad
        \on{Ker}(\rho)\text{ is coisotropic},\vphantom{\dh}\tag{c}\label{eq:coiso}\\
        [fu,v]&=f[u,v]-\pi(\dh f\otimes u) v,\tag{d}\label{eq:first_slot}\\
        [u,\mc Dn]&=\mc D[u,n],\tag{e}\vphantom{\dh}\label{eq:Dvsbkt}\\
        [u,\dh f]&=\hat d(\rho(u)f),\tag{f}\label{eq:fctionvsbkt}
     \end{align}
\end{lem}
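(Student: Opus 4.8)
The plan is to derive the six identities in a carefully chosen order, since several of them feed into one another. I would begin with \eqref{eq:morphism}: apply the Leibniz rule \eqref{eq:anchor} to $[u,fv]$ and to $[v,fu]$, take the difference, use the symmetry relation \eqref{eq:sym} together with $\rho\circ\mc D=0$ (which itself must be established first — see below), and compare with the Jacobi-type identity \eqref{eq:jacobi} applied to a function-multiple; the standard Courant-algebroid argument then forces $\rho([u,v])=[\rho(u),\rho(v)]$ as vector fields. Actually, the cleanest route is the familiar one: from \eqref{eq:jacobi} and \eqref{eq:anchor}, $[u,[v,fw]]-[[u,v],fw]-[v,[u,fw]]=0$ expands, using \eqref{eq:anchor} repeatedly, to $\bigl(\rho([u,v])f - \rho(u)\rho(v)f + \rho(v)\rho(u)f\bigr)w=0$ for all $w$, hence \eqref{eq:morphism}.

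Next I would establish \eqref{eq:coiso}. The statement that $\on{Ker}(\rho)$ is coisotropic: by the Lemma characterising coisotropy, this is equivalent to $(\on{Ker}(\rho)^\circ\otimes N)_E\subset\on{Ker}(\rho)$, which in turn (using $\dh=\rho^t\circ d$, so that $\on{Im}(\dh)\subset\on{Ker}(\rho)^\circ$ when contracted appropriately) will follow once we know $\rho\circ\mc D=0$ together with the admissibility condition $\pi(\on{End}(E))\subset\mf g$ and the compatibility of the bracket with the anchor; concretely, $\rho\circ\mc D=0$ comes from applying $\rho$ to \eqref{eq:sym}, using \eqref{eq:morphism} to get $\rho([u,v])+\rho([v,u])=2[\rho(u),\rho(v)]$ is \emph{not} immediately zero, so instead I would compute $\rho(\mc D(fn))$ two ways via \eqref{eq:diff}: $\rho(\mc D(fn))=f\rho(\mc D n)+\rho((\dh f\otimes n)_E)$, and separately $\mc D(fn)$ must satisfy the defining property \eqref{eq:sym}; comparing the $C^\infty$-module structures and using that $\rho\circ\dh=0$ (as $\rho\circ\rho^t\circ d$, which vanishes by the image of $\rho^t$ lying in... ) — the honest statement is $\rho\bigl((\dh f\otimes n)_E\bigr)$, and I would show this is $\rho(\mc D(fn))-f\rho(\mc D n)$, forcing $\rho\circ\mc D$ to be a tensorial ($C^\infty$-linear) operator that one then checks vanishes pointwise using admissibility. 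I expect this coisotropy/$\rho\circ\mc D=0$ step to be the main obstacle, precisely because it is where the admissibility hypothesis $\pi(\on{End}(E))\subset\mf g$ has to be invoked in an essential way, via the requirement that $[u,\cdot]$ preserves the $\ms G$-structure so its vertical part lies in $\mf g$.

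With \eqref{eq:morphism} and \eqref{eq:coiso} in hand, the remaining four are comparatively mechanical. For \eqref{eq:first_slot}: combine \eqref{eq:sym} and \eqref{eq:anchor} to write $[fu,v]=-[v,fu]+\mc D(fu\otimes v)_N = -f[v,u]-(\rho(v)f)u + \mc D(f\,(u\otimes v)_N)$; expand the last term via \eqref{eq:diff} as $f\mc D(u\otimes v)_N + (\dh f\otimes (u\otimes v)_N)_E$; then rewrite $-f[v,u]+f\mc D(u\otimes v)_N = f[u,v]$ using \eqref{eq:sym} again, and identify $-(\rho(v)f)u + (\dh f\otimes (u\otimes v)_N)_E$ with $-\pi(\dh f\otimes u)v$ by unwinding the definition $\pi=1-\pi'$ and recalling that $\pi'(\dh f\otimes u)$ is built from the composition $E\otimes E\to S^2E\to N\to S^2E\to E\otimes E$ (the term $(\dh f\otimes(u\otimes v)_N)_E$ is exactly $\pi'(\dh f\otimes u)v$, while $(\rho(v)f)u=\la\dh f,v\ra u=\id(\dh f\otimes u)v$). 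For \eqref{eq:lcentre}: apply $[-,u]$ to $\mc D n$; since $S^2E\to N$ is surjective write $n=(v\otimes w)_N$, so $\mc D n=[v,w]+[w,v]$, and then $[[v,w]+[w,v],u] = [[v,w],u]+[[w,v],u]$, which the Jacobi identity \eqref{eq:jacobi} (in the form $[[a,b],u]=[a,[b,u]]-[b,[a,u]]$) reorganises into $\bigl([v,[w,u]]-[w,[v,u]]\bigr)+\bigl([w,[v,u]]-[v,[w,u]]\bigr)=0$. Then \eqref{eq:Dvsbkt} follows by differentiating the identity $[u,\mc D n]+[\mc D n, u]=\mc D(u\otimes\mc Dn)_N$ from \eqref{eq:sym} and using \eqref{eq:lcentre} to kill $[\mc D n,u]$, together with the fact that $[u,\cdot]$ commutes with the natural maps (so $[u,\mc D n]=\mc D[u,n]$ once one checks the term $(u\otimes\mc D n)_N$ is controlled — alternatively, apply $[u,\cdot]$ directly to $\mc D n=[v,w]+[w,v]$ and use \eqref{eq:jacobi} and \eqref{eq:sym}). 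Finally \eqref{eq:fctionvsbkt}: apply $[u,\cdot]$ to $\dh f=\rho^t(df)$, use that $[u,\cdot]$ preserves the $\ms G$-structure hence acts on $E^*$ compatibly, together with \eqref{eq:morphism} giving $\rho^t[u,\xi]$-type relations, and \eqref{eq:anchor}/\eqref{eq:diff} to push the derivative through, yielding $[u,\dh f]=\dh(\rho(u)f)$ — the key input being that the Lie derivative of $\rho^t(df)$ along the vector field induced by $u$ equals $\rho^t(d(\rho(u)f))$, which is just naturality of $d$ under the flow.
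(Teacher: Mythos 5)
Your treatment of parts (a), (b), (d) and (e) reproduces the paper's proof essentially verbatim (set $w\to fw$ in \eqref{eq:jacobi} for (a); polarise $[u,u]$ via \eqref{eq:sym} for (b); the $\pi=1-\pi'$ bookkeeping for (d) is exactly the paper's identity $-(\rho(v)f)u+(\dh f\otimes(u\otimes v)_N)_E=-\pi(\dh f\otimes u)v$; applying $[u,\cdot]$ to $\mc D(v\otimes w)_N$ for (e)), and your sketch of (f), once made honest, is the paper's pairing computation $\la[u,\dh f],v\ra=\rho(u)\rho(v)f-\rho([u,v])f=\rho(v)\rho(u)f$ using (a).

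The genuine gap is part (c). You dismiss the direct route on the grounds that applying $\rho$ to \eqref{eq:sym} yields $\rho([u,v])+\rho([v,u])=2[\rho(u),\rho(v)]$, which is ``not immediately zero'' --- but this is a computational slip: by (a) the left-hand side is $[\rho(u),\rho(v)]+[\rho(v),\rho(u)]=0$, since the Lie bracket of vector fields is antisymmetric, so $\rho(\mc D(u\otimes v)_N)=0$ for all sections $u,v$ at once. Your replacement argument then does not close the hole: you reduce coisotropy to $((\on{Ker}\rho)^\circ\otimes N)_E\subset\on{Ker}\rho$ but never establish it, you invoke ``$\rho\circ\dh=0$'' (which does not even typecheck, as $\dh f\in\Gamma(E^*)$), and you end by asserting that $\rho\circ\mc D$ is tensorial and ``vanishes pointwise using admissibility'' with no argument; admissibility is in fact not the relevant input here. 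The missing step, which is the paper's, is to apply $\rho$ to \eqref{eq:diff} with $n=(u\otimes v)_N$ and use $f(u\otimes v)_N=((fu)\otimes v)_N$: both $\rho\mc D(fn)$ and $f\rho\mc Dn$ vanish by the previous observation, hence $\rho((\dh f\otimes n)_E)=0$. Since sections of the form $(u\otimes v)_N$ span $N$ pointwise and the covectors $\dh f=\rho^t df$ span $(\on{Ker}\rho)^\circ\cong\on{Im}(\rho^t)$ pointwise, this is precisely $((\on{Ker}\rho)^\circ\otimes N)_E\subset\on{Ker}\rho$, i.e.\ coisotropy of $\on{Ker}\rho$ by the earlier lemma; and writing an arbitrary section of $N$ as a combination of sections $(u\otimes v)_N$ then gives $\rho\circ\mc D=0$ in full. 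So (c) needs only (a), \eqref{eq:sym}, \eqref{eq:diff} and $(\on{Ker}\rho)^\circ\cong\on{Im}(\rho^t)$, with no appeal to $\pi(\on{End}(E))\subset\mf g$.
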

\begin{proof}
    First equation is obtained by setting $w\to f w$ in \eqref{eq:jacobi} and repeatedly using \eqref{eq:anchor}.
    Setting $u=v$ in \eqref{eq:jacobi}, using \eqref{eq:sym} and the surjectivity of $S^2 E\to N$, we get \eqref{eq:lcentre}. 
    The third line is obtained by acting with the anchor on \eqref{eq:sym} and \eqref{eq:diff} and using the fact that $(\on{Ker}\rho)^\circ\cong \on{Im}(\rho^t)$.
    The next claim follows by a straightforward application of \eqref{eq:anchor}, \eqref{eq:sym}, \eqref{eq:diff}, and
    \[-(\rho(v)f)u+[\dh f\otimes ( u\otimes v )_N]_E=-\la\dh f,v\ra u+[\dh f\otimes ( u\otimes v )_N]_E=-\pi(\dh f\otimes u) v.\] 
    For \eqref{eq:Dvsbkt} and \eqref{eq:fctionvsbkt} we calculate
    \begin{align*}
        [u,\mc D(v\otimes w)_N]&=[u,[v,w]+[w,v]]=[[u,v],w]+[v,[u,w]]+[[u,w],v]+[w,[u,v]]\\
        &=\mc D([u,v]\otimes w)_N+\mc D(v\otimes[u,w])_N=\mc D[u,(v\otimes w)_N],
    \end{align*}
    \begin{equation*}
        \la [u,\dh f],v\ra=\rho(u)\la \dh f,v\ra-\la \dh f,[u,v]\ra=\rho(u)\rho(v)f-\rho([u,v])f=\rho(v)\rho(u)f=\la \hat d(\rho(u)f),v\ra.\qedhere
    \end{equation*}
\end{proof}
Since $\on{Ker}(\rho)$ is coisotropic, we have a chain complex
\begin{equation}\label{eq:seq}
    T^*M\otimes N \to E\xrightarrow{\rho} TM\to 0.
\end{equation}

    \begin{defn}
        We say that a $\ms G$-algebroid is \emph{exact} if this is an exact sequence (i.e.\ it is exact at $E$ and $TM$). More generally, a $\ms G$-algebroid with $\rho$ surjective is called \emph{transitive}.
    \end{defn}
    
\section{Examples of \texorpdfstring{$\ms G$}{G}-algebroids}
\subsection{Lie algebroids}
    Taking $(\ms G,E,N)=(\ms{GL}(n,\R),\R^n,0)$ we get the definition of Lie algebroids \cite{Pradines}. In this case a $\ms G$-algebra is the same as a Lie algebra.
    
    One of the simplest examples is:
    \begin{example}[\emph{Tangent Lie algebroid}]
        Take $E=TM$, with $M$ an arbitrary manifold, the bracket given by the commutator of vector fields, and the anchor being the identity (we have $\mc D=0$).
    \end{example}
    The sequence \eqref{eq:seq} becomes simply $0\to E\to TM\to 0$, implying:
    \begin{prop}
        In the $(\ms{GL}(n,\R),\R^n,0)$-case, a $\ms G$-algebroid is exact iff it is a tangent Lie algebroid.
    \end{prop}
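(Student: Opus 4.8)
The plan is to unwind what ``exact'' means in this degenerate case and then recognise the structure that remains. Since $N=0$, the operator $\mc D$ vanishes identically, \eqref{eq:sym} reduces to $[u,v]=-[v,u]$, and together with \eqref{eq:jacobi} and \eqref{eq:anchor} this says exactly that a $\ms G$-algebroid here is an ordinary Lie algebroid (the $\ms G$-structure condition is automatic for $\ms G=\ms{GL}(n,\R)$). Moreover the complex \eqref{eq:seq} collapses to $0\to E\xrightarrow{\rho}TM\to 0$, so exactness is equivalent to $\rho$ being fibrewise injective and surjective, i.e.\ a vector bundle isomorphism.

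The backward direction is then immediate: for a tangent Lie algebroid $E=TM$ and $\rho=\id$, so the collapsed sequence reads $0\to TM\xrightarrow{\id}TM\to 0$, which is exact. For the forward direction I would use $\rho\colon E\to TM$ itself as the candidate isomorphism onto the tangent Lie algebroid: transporting the algebroid structure along the bundle isomorphism $\rho$, the anchor becomes $\rho\circ\rho^{-1}=\id$, while by Lemma \ref{lem:basic}\eqref{eq:morphism} the transported bracket of $X,Y\in\Gamma(TM)$ is $\rho\bigl([\rho^{-1}X,\rho^{-1}Y]\bigr)=[\rho(\rho^{-1}X),\rho(\rho^{-1}Y)]=[X,Y]$, the commutator of vector fields. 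Hence $(E,[\cdot,\cdot],\rho)$ is isomorphic to $(TM,[\cdot,\cdot]_{\text{comm}},\id)$, i.e.\ it is a tangent Lie algebroid. (The Leibniz and Jacobi identities for the transported structure require no separate check, since they hold for the original one and are preserved by isomorphism.)

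I do not expect a genuine obstacle here: the content is entirely in the two easy observations that exactness forces $\rho$ to be a bundle isomorphism and that Lemma \ref{lem:basic}\eqref{eq:morphism} identifies the transported bracket with the commutator. The only points worth stating carefully are that ``is a tangent Lie algebroid'' is meant up to isomorphism of $\ms G$-algebroids, and that a vector bundle morphism which is bijective on each fibre is automatically an isomorphism of vector bundles.
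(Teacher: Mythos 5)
Your proof is correct and follows essentially the argument the paper intends (the paper leaves it implicit after noting that the sequence \eqref{eq:seq} collapses to $0\to E\xrightarrow{\rho}TM\to 0$): exactness makes $\rho$ a bundle isomorphism, and Lemma \ref{lem:basic}\eqref{eq:morphism} identifies the transported bracket with the commutator, so the algebroid is the tangent Lie algebroid up to isomorphism.
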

    
\subsection{Courant algebroids}
    Courant algebroids \cite{LWX} correspond to $\ms G$-algebroids for $(\ms O(p,q), \R^{p+q}, \R)$ and with $\mc D=\hat d$. In the last equality, we use the identification $E\cong E^*$ provided by the $\ms{O}(p,q)$-structure.
    
    \begin{example}[\emph{Twisted generalised tangent bundle}]\label{ex:courant}
        Let $M$ be an $n$-dimensional manifold and $H\in \Omega^3(M)$ a closed form. Then \[E=TM\oplus T^*M\] has a natural $\ms O(n,n)$-structure, given by the pairing of vectors and 1-forms. The bracket is given by
        \[[X+\alpha,X'+\alpha']=\mc L_X X'+(\mc L_X \alpha'-i_{X'}d\alpha+i_Xi_{X'}H),\]
        the anchor is the projection onto $TM$, and $\mc D=d$. 
    \end{example}
    The classification result \cite{Severa1} for exact Courant algebroids can be stated in the present language as follows.
    \begin{thm}
        In the $(\ms O(p,q), \R^{p+q}, \R)$-case, a $\ms G$-algebroid is exact iff it has the form of a twisted generalised tangent bundle. Exact $\ms G$-algebroids only exist for $p=q$ and they are classified by $H^3(M)$. Locally, every exact $\ms G$-algebroid has the form from Example \ref{ex:courant} with $H=0$.
    \end{thm}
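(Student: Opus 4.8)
The ``if'' direction is immediate since the complex \eqref{eq:seq} is visibly exact for the bundle of Example~\ref{ex:courant}, so the plan is to prove ``only if'' by first extracting the shape of the data from exactness and then pinning down the bracket from the axioms. Using the $\ms O(p,q)$-structure to identify $E\cong E^*$: here $N\cong\R$ is the trivial representation, the map $S^2E\to N$ is a multiple of the inner product, and the first map in \eqref{eq:seq} sends $\alpha\otimes n$ to a multiple of $(\rho^t\alpha)^\sharp$; hence exactness at $E$ reads $\on{Ker}\rho=(\on{Ker}\rho)^\perp$, i.e.\ $\on{Ker}\rho$ is Lagrangian for a form of signature $(p,q)$. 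Such a subspace exists only when $p=q=:n$, in which case $\dim\on{Ker}\rho=n$ and surjectivity of $\rho$ forces $\dim M=n$, so I obtain a short exact sequence $0\to T^*M\to E\xrightarrow{\rho}TM\to0$ with $T^*M=\on{Ker}\rho$ Lagrangian. I would also check $\D=\dh$: putting $\D 1=:\theta\in\Omega^1(M)$ (using $\rho\circ\D=0$) and $\D f=\dh f+f\theta$ from \eqref{eq:diff}, the identities \eqref{eq:lcentre}, \eqref{eq:Dvsbkt}, \eqref{eq:sym} give $\D(i_{\rho(u)}\theta)=0$ for all $u$; since $i_{\rho(u)}\theta$ runs over all functions near a point where $\theta\ne0$, while $\D g=dg+g\theta$ cannot vanish identically there, this forces $\theta=0$.

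Next I would choose a global isotropic splitting $s\colon TM\to E$ of $\rho$, obtained from any splitting $s_0$ by subtracting one half of the symmetric defect $(X,Y)\mapsto\la s_0X,s_0Y\ra\in\Gamma(S^2T^*M)$, viewed through $T^*M\subset E$; then $E\cong TM\oplus T^*M$ with the standard pairing. Setting $H(X,Y,Z):=\la[sX,sY],sZ\ra$, I would show $H\in\Omega^3(M)$: isotropy of $s(TM)$ makes $(sX\otimes sY)_N=0$, so \eqref{eq:sym} gives antisymmetry of $[sX,sY]$ in $X,Y$; $\ms G$-invariance of the pairing applied to $0=X\la sY,sZ\ra$ gives $H(X,Y,Z)=-H(X,Z,Y)$; $C^\infty$-linearity in $Z$ is clear, and in $X$ it follows from \eqref{eq:first_slot}, which — with the normalisation fixed by admissibility, so that $\pi(\dh f\otimes u)v=(\rho(v)f)u-\la u,v\ra\D f$ — gives $[f\,sX,sY]=f[sX,sY]-(Yf)sX$, whose correction term is orthogonal to $sZ$.

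Then I would identify the bracket with the twisted Dorfman bracket of Example~\ref{ex:courant}: from $\rho[sX,sY]=[X,Y]$ (by \eqref{eq:morphism}) one gets $[sX,sY]=s[X,Y]+i_Xi_YH$; \eqref{eq:anchor} together with $[sX,\dh f]=\dh(Xf)$ from \eqref{eq:fctionvsbkt} and a local reduction to exact $1$-forms gives $[sX,\alpha]=\mc L_X\alpha$; \eqref{eq:sym} then gives $[\alpha,sY]=-i_Yd\alpha$; and \eqref{eq:lcentre} (i.e.\ $[\dh f,\cdot]=0$), the $C^\infty$-bilinearity of the bracket on $\Omega^1\times\Omega^1$, and the fact that exact $1$-forms generate $T^*M$ pointwise force $[\alpha,\beta]=0$. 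Since \eqref{eq:jacobi} holds, the standard computation for this bracket then yields $dH=0$. Finally, the general isotropic splitting is $s+B$ with $B\in\Omega^2(M)$, and a short computation shows $H$ changes by $dB$; hence $[H]\in H^3(M)$ is a well-defined isomorphism invariant, every class is realised by Example~\ref{ex:courant}, and two exact $\ms G$-algebroids with the same class are related by a $B$-shift and hence isomorphic, giving the bijection with $H^3(M)$. On a contractible chart $H^3$ vanishes, so $H=dB$ there and the $B$-shift $s\mapsto s-B$ brings the algebroid to the form of Example~\ref{ex:courant} with $H=0$.

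The dimension count, the existence of a global isotropic splitting, and the classification bookkeeping are all quick; I expect the main obstacle to be the middle part — showing from the abstract axioms that the bracket is \emph{precisely} the twisted Dorfman bracket (total antisymmetry and $C^\infty$-linearity of $H$, plus the exact form of the mixed brackets $[sX,\alpha]$, $[\alpha,sY]$, $[\alpha,\beta]$) and that the Jacobi identity is equivalent to $dH=0$. Individually these are short calculations, but collectively they are sensitive to signs and to the admissibility-fixed normalisation constant, which must be tracked carefully so that \eqref{eq:first_slot} reproduces the standard rule $[fu,v]=f[u,v]-(\rho(v)f)u+\la u,v\ra\D f$.
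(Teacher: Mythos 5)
Your proof is correct, but note that the paper does not actually prove this theorem: it is quoted from Ševera's classification of exact Courant algebroids \cite{Severa1}, so there is no in-paper proof to compare against. The closest analogue is the paper's classification of M-exact elgebroids (Theorem \ref{thm:class}, via Lemma \ref{lem:large} in Appendix \ref{ap:proof_class}), and your argument follows precisely that strategy transplanted to the $\ms O(p,q)$ case: exactness of \eqref{eq:seq} forces $\on{Ker}\rho$ to be Lagrangian (hence $p=q$ and $\dim M=n$), an isotropic splitting identifies $E\cong TM\oplus T^*M$, the axioms \eqref{eq:anchor}, \eqref{eq:sym}, \eqref{eq:first_slot}, \eqref{eq:lcentre}, \eqref{eq:Dvsbkt}, \eqref{eq:fctionvsbkt} pin the bracket down to the Dorfman bracket twisted by a genuine $3$-form $H$, the Jacobi identity \eqref{eq:jacobi} is then equivalent to $dH=0$, and a change of isotropic splitting shifts $H$ by an exact form, yielding the $H^3(M)$ classification and the local normal form. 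Two choices you made are genuinely worthwhile: you derive $\mc D=\hat d$ (i.e.\ $\mc D1=0$) from the axioms in the exact case rather than assuming it, which is needed because the theorem is phrased for arbitrary $\ms G$-algebroids with this data while the paper's identification of Courant algebroids builds $\mc D=\hat d$ into the definition; and you correctly observe that admissibility (Definition \ref{def:grpdata}) fixes the normalisation of $N\to S^2E$ so that \eqref{eq:first_slot} becomes the familiar rule $[fu,v]=f[u,v]-(\rho(v)f)u+\la u,v\ra\,\mc Df$. The only blemishes are cosmetic: your $H$ differs from that of Example \ref{ex:courant} by an ordering/sign convention ($i_Xi_{X'}H$ versus $i_{X'}i_XH$), which is immaterial for closedness and for the classification, and for $[sX,\alpha]=\mc L_X\alpha$ you could avoid the $E\cong E^*$ dualisation altogether by using \eqref{eq:Dvsbkt} directly in the form $[sX,\mc Df]=\mc D(Xf)$.
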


\subsection{Elgebroids}
    \begin{defn}
        An \emph{exceptional algebroid}, or simply \emph{elgebroid}, is a $\ms G$-algebroid given by the exceptional group data set (see Example \ref{ex:exc_data}), for some $n\in\{3,\dots,6\}$. An elgebroid over a point is called an \emph{elgebra}.
    \end{defn}
    \begin{defn}
        An elgebroid is \emph{M-exact} if it is exact and $\dim M=n$. It is \emph{IIB-exact} if it is exact and $\dim M=n-1$.
    \end{defn}
    \begin{rem}
       As the name suggests, these are related to M-theory and type IIB solutions of the section constraint. Proposition \ref{prop:classcoiso} shows that these are the only 2 possibilities of exact elgebroids.
    \end{rem}
    \begin{example}[\emph{Exceptional tangent bundle} \cite{Hull,PW,CSCW}]\label{ex:exc}
        Let $M$ be a manifold of dimension $n\in\{3,\dots,6\}$. We can then consider
        \begin{equation}\label{eq:etb}
            E:=TM\oplus \w{2}T^*M\oplus \w{5}T^*M,
        \end{equation}
        with $\rho$ given by the projection onto the first factor and
        \begin{equation}\label{eq:bracket}
            [X+\sigma_2+\sigma_5,X'+\sigma_2'+\sigma_5']=\mc L_X X'+(\mc L_X \sigma_2'-i_{X'}d\sigma_2)+(\mc L_X \sigma_5'-i_{X'}d\sigma_5-\sigma_2'\wedge d\sigma_2).
        \end{equation}
        The map $\mc D$, acting on the sections of $N=T^*M\oplus \w{4}T^*M\oplus(T^*M\otimes\w{6}T^*M)$, coincides with $d$ on the first two summands, and vanishes on the third.\footnote{The $\ms E_{n(n)}\times\R^+$-structure is induced from the $\ms{GL}(n,\R)$-structure of the bundle $E$, given by the decomposition \eqref{eq:etb} (c.f. Example \ref{ex:exc_data}).} This is an M-exact elgebroid.
    \end{example}
    Conversely, we have:
    \begin{thm}\label{thm:class}
        Any M-exact elgebroid is locally isomorphic to the exceptional tangent bundle.
    \end{thm}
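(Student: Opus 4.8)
### Proof plan for Theorem \ref{thm:class}

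The plan is to show that an arbitrary M-exact elgebroid is, after a suitable choice of local frame, forced into the form of Example~\ref{ex:exc}, with the closed form twists turning out to be locally trivial. First I would use the defining exact sequence \eqref{eq:seq}: since $\dim M = n$ and $\rho\colon E\to TM$ is surjective with $E$ of the appropriate exceptional rank, the kernel $K:=\on{Ker}(\rho)$ has rank $\dim N$ and the sequence $T^*M\otimes N\to E\xrightarrow{\rho} TM\to 0$ is exact, with $T^*M\otimes N\to K$ also surjective. Because $K$ is coisotropic (Lemma~\ref{lem:basic}\eqref{eq:coiso}) and has the right dimension, I expect it to be forced to be the M-theory co-Lagrangian subspace, i.e.\ locally $K\cong \w{2}T^*M\oplus\w{5}T^*M$ inside $E\cong TM\oplus\w{2}T^*M\oplus\w{5}T^*M$; this identification should follow from Lemma~\ref{lem:colagexc} together with the classification of co-Lagrangian subspaces in Appendix~\ref{ap:class}, after checking that the type IIB co-Lagrangian has the wrong corank to appear when $\dim M=n$. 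This pins down the underlying graded vector bundle and the anchor.

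Next I would choose a local isotropic splitting $TM\hookrightarrow E$ of $\rho$ (isotropic splittings exist locally because the obstruction lives in a sheaf cohomology group that vanishes on a chart; alternatively one builds it directly from a coordinate frame). Using such a splitting, every bracket $[u,v]$ decomposes into components along $TM$, $\w{2}T^*M$, $\w{5}T^*M$. The $TM$-component is fixed to be the Lie bracket of vector fields by Lemma~\ref{lem:basic}\eqref{eq:morphism}. The key constraints then come from: the Leibniz/anchor rule \eqref{eq:anchor} and its consequence \eqref{eq:first_slot}, which force the dependence on $C^\infty(M)$-rescalings and hence fix the ``$\mc L_X$'' parts and the derivative structure; the symmetry axiom \eqref{eq:sym} together with the explicit formula for $S^2E\to N$ from Example~\ref{ex:exc_data}, which fixes the symmetric part of the bracket; and $\mc D$ being determined by \eqref{eq:sym}. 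What remains undetermined at this stage is a tensorial (i.e.\ $C^\infty(M)$-linear in both arguments) piece, which by $\ms{GL}(n,\R)$-equivariance must be built from a $3$-form $H\in\Omega^3(M)$ and a $6$-form $F\in\Omega^6(M)$ (these being the only invariant ways to map $TM\otimes TM$ into $\w{2}T^*M$ and $\w{5}T^*M$, up to the pieces already accounted for); this reproduces a twisted version of \eqref{eq:bracket}.

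Then I would impose the Jacobi/Leibniz identity \eqref{eq:jacobi} on the twisted bracket. Expanding it on frame vector fields, the mixed components should yield exactly the Bianchi-type conditions $dH=0$ and $dF - \tfrac12 H\wedge H = 0$ (or the analogous normalization dictated by the $\sigma_2'\wedge d\sigma_2$ term), i.e.\ the twists form a closed ``$L_\infty$-type'' cocycle. Finally, on a contractible chart I would gauge the twists away: an automorphism of $E$ preserving the $\ms G$-structure and covering $\id_M$ is given by a ``$B$-shift'' by $\beta\in\Omega^2(M)$ and a ``$C$-shift'' by $\gamma\in\Omega^5(M)$, and these act on $(H,F)$ by $H\mapsto H + d\beta$, $F\mapsto F + d\gamma + (\text{something like }\beta\wedge d\beta)$. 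Since $H$ is closed it is exact on the chart, so a $B$-shift kills $H$; with $H=0$ the remaining condition is $dF=0$, so a $C$-shift kills $F$. This brings the elgebroid to the untwisted form of Example~\ref{ex:exc}, proving local isomorphism.

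The main obstacle I anticipate is the bookkeeping in the second paragraph: carefully enumerating all $\ms{GL}(n,\R)$-equivariant contributions to each graded component of the bracket and showing that axioms \eqref{eq:anchor}, \eqref{eq:sym}, \eqref{eq:first_slot} really cut the freedom down to precisely the pair $(H,F)$ of differential forms — in particular ruling out extra $C^\infty(M)$-linear terms valued in $\w{5}T^*M$ coming from $TM\otimes \w{2}T^*M$ and verifying that the would-be term mapping into $TM$ is excluded by \eqref{eq:morphism}. This representation-theoretic step, together with checking that the two co-Lagrangian types are distinguished purely by corank (so that $\dim M=n$ selects the M-theory one), is where the real content of the proof lies; the Jacobi-identity computation and the gauging-away step are then relatively mechanical, closely paralleling the exact Courant algebroid argument.
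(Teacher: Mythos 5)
Your overall skeleton (identify $\on{Ker}\rho$ with the codimension-$n$ co-Lagrangian, choose a local isotropic splitting, pin down the bracket up to a tensorial twist, impose Jacobi, gauge the twist away on a chart) is the same as the paper's, and your first paragraph matches the paper's use of Lemma \ref{lem:colagexc} and Proposition \ref{prop:pairs}. However, there is a genuine error at precisely the step you yourself flag as carrying the real content: the enumeration of the residual tensorial freedom. For $E\cong TM\oplus\w{2}T^*M\oplus\w{5}T^*M$ the extra $C^\infty(M)$-bilinear piece of the bracket is \emph{not} parametrised by a $3$-form $H$ and a $6$-form $F$: contracting two vectors into a $3$-form gives a $1$-form, not an element of $\w{2}T^*M$, and a $6$-form cannot produce the $\w{5}T^*M$-valued component either, so your degree bookkeeping is off by one throughout (it is the Courant pattern carried over naively). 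Moreover, the constraint that pins down the twist is not bare $\ms{GL}(n,\R)$-equivariance (every tensor field is equivariant in that sense); it is the axiom that $[u,\cdot]$ preserves the $\ms E_{n(n)}\times\R^+$-structure, which forces the unknown piece to be of the form $A(u)\cdot v$ with $A$ valued in the adjoint, i.e.\ in a central weight piece plus $\w{3}T^*\oplus\w{6}T^*$, after which \eqref{eq:sym} and \eqref{eq:diff} force $A$ to be built from a $1$-form $F_1$ and a $4$-form $F_4$ (the would-be $7$-form flux vanishes identically for $n\le 6$); this is the content of Lemma \ref{lem:large} and the bracket \eqref{eq:long}. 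In particular you have missed the $F_1$ twist coming from the $\R^+$ factor.

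Consequently your last two steps do not go through as stated. The Jacobi identity yields $dF_1=0$ and $dF_4+F_1\wedge F_4=0$, not $dH=0$ and $dF-\tfrac12\,H\wedge H=0$; and the available gauge freedom is not a B-shift by $\beta\in\Omega^2(M)$ and a C-shift by $\gamma\in\Omega^5(M)$, but shifts by the nilpotent part $\Omega^3(M)\oplus\Omega^6(M)$ of the adjoint together with the $\R^+$ rescaling ambiguity of the identification \eqref{eq:identif}, acting by $F_1\mapsto F_1+d\psi$, $F_4\mapsto e^{-\psi}F_4$ and $F_4\mapsto F_4+dA_3$. The correct local normalisation is then: use the rescaling to set $F_1=0$ (possible since $F_1$ is closed), after which $F_4$ is closed and a $3$-form shift kills it, recovering \eqref{eq:bracket}. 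So while the architecture of your plan mirrors the paper, the identification of the fluxes and gauge parameters --- and hence the Bianchi identities and the gauging-away argument --- must be redone with the adjoint-valuedness of the bracket's vertical part as the organising principle rather than equivariance alone.
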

    The proof of the Theorem is in Appendix \ref{ap:proof_class}.
    \begin{rem}
       It is clear from the proof that the bracket can be twisted, in analogy to Example \ref{ex:courant}, using a pair $F_1\in\Omega^1(M)$, $F_4\in\Omega^4(M)$ satisfying $dF_1=0$ and $dF_4+F_1\wedge F_4=0$. However, a global classification of exact elgebroids is more subtle than in the Courant case. Note that in the physics literature, $F_1$ is taken to be exact, since otherwise, given a choice of generalised metric, the warp factor of the $(11-n)$-dimensional part of the M-theory metric will not be globally defined. 
    \end{rem}

\section{Pullbacks}\label{sec:pullbacks}
    We now proceed to define pullbacks, which play an important role in the construction of $\ms G$-algebroids and in the description of dualities. This can be seen as an extension of the results obtained for the Courant case in \cite{LBM}.
\begin{defn}
    Fix an admissible group data set. Let $\varphi\colon M' \to M$ be a surjective submersion and $E\to M$ a $\ms G$-algebroid. A $\ms G$-algebroid structure on $E':=\varphi^*E\to M'$, with the induced $\ms G$-structure, is called a \emph{pullback of $E$} if for all sections $u,v\in\Gamma(E)$ and $n\in\Gamma(N)$ we have
    \[[\varphi^*u,\varphi^*v]'=\varphi^*[u,v],\quad \varphi_*\rho'(\varphi^*u)=\rho(u),\quad \mc D'\varphi^*n=\varphi^*\mc Dn.\]
\end{defn}
    Note that the $\ms G$-algebroid structure on $E'$ is fully determined by its anchor, the map $\varphi$, and the structures on $E$. Thus, specifying the anchor, there is at most one pullback (for a given $E$ and $\varphi$).
\begin{defn}
    A $\ms G$-algebroid is \emph{Leibniz parallelisable} if it can be written as a pullback of a $\ms G$-algebra (along the unique map $\varphi$ to the point).
\end{defn}
    This coincides with the notion of Leibniz (or general) parallelisability \cite{LSCW} in the physics literature -- being a pullback of a $\ms G$-algebra means that there is a global $\ms G$-frame $e_\alpha$ of $E$ for which the structure coefficients $c^\gamma_{\alpha\beta}$, defined by $[e_\alpha,e_\beta]=c^\gamma_{\alpha\beta}e_\gamma$, are constant.
\begin{defn}
    An \emph{action} of a $\ms G$-algebra $E$ on a manifold $M'$ is a map $\chi\colon E\to \Gamma(TM')$ which preserves the brackets. The \emph{stabiliser} of the action at a point $p\in M'$ is the kernel of $\chi_p\colon E\to T_pM'$.
\end{defn}
    In the particular case $M=\text{pt}$, the anchor of $E'$ can be seen as an action of $E$ on $M'$. A natural question then is: Given an action of a $\ms G$-algebra on a manifold, when does this define a $\ms G$-algebroid via the pullback construction? Let us now answer the question for the case of M-exact elgebroids.
\begin{thm}\label{thm:action}
    A transitive action of an elgebra $E$ on an $n$-dimensional manifold $M'$ defines an M-exact pullback elgebroid on $E'=M'\times E$ iff at every point the stabilisers are co-Lagrangian of codimension $n$.
\end{thm}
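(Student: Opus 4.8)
The plan is to prove the two implications separately; the ``only if'' direction follows quickly from the earlier lemmas, whereas the ``if'' direction requires an explicit construction of the elgebroid and a verification of the axioms.

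\emph{Only if.} Suppose $E'=M'\times E$ carries a pullback elgebroid structure which is M-exact, and write $V_p:=\on{Ker}\chi_p\subset E$ for the stabiliser at $p\in M'$. The anchor $\rho'$ restricts to $\chi$ on constant sections and is $C^\infty(M')$-linear, so on the fibre $E'_p\cong E$ it equals $\chi_p$; hence $\on{Ker}\rho'_p=V_p$. M-exactness means $\dim M'=n$ together with exactness of $T^*M'\otimes N'\to E'\xrightarrow{\rho'}TM'\to 0$. Exactness at $TM'$ is surjectivity of $\rho'$, i.e.\ transitivity, and forces $\on{codim}V_p=\dim T_pM'=n$; exactness at $E'$ reads $V_p=(V_p^\circ\otimes N)_E$ (using $\on{Im}((\rho'_p)^t)=(\on{Ker}\rho'_p)^\circ$), which by Lemma~\ref{lem:colagexc} is equivalent to $V_p$ being co-Lagrangian.

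\emph{If.} Now assume the action is transitive and the stabilisers $V_p$ are co-Lagrangian, in particular coisotropic. One constructs the structure on $E'=M'\times E$ directly: the anchor is $\chi$ extended $C^\infty(M')$-linearly; the bracket is the elgebra bracket on a global constant $\ms G$-frame $\{e_\alpha\}$, extended to an $\R$-bilinear bracket via \eqref{eq:anchor} and \eqref{eq:first_slot} (a short check shows these rules are compatible, so the extension is unambiguous); and $\mc D'(fn):=f\,\mc Dn+(\hat d'f\otimes n)_{E'}$ for constant $n$, extended $\R$-linearly, where $\hat d':=(\rho')^t\circ d$. Then \eqref{eq:anchor} and \eqref{eq:diff} hold by construction, \eqref{eq:sym} follows by a short computation (using that $\hat d'$ factors through $(\rho')^t$), and the pullback conditions hold by construction. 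The structural input is that each section $u$ gives $[u,\cdot]'=\nabla^0_{\rho'(u)}+A_u$, where $\nabla^0$ is the flat connection on $E'$ coming from the trivialisation $E'=M'\times E$ and $A_u$ is multiplication by a section of $\on{End}(E')$; since the elgebra bracket preserves the $\ms G$-structure ($[e_\alpha,\cdot]_E\in\mf g$) and the data set is admissible ($\pi(\on{End}(E))\subset\mf g$), the endomorphism $A_u$ acts fibrewise by an element of $\mf g$ -- which is exactly the requirement that $[u,\cdot]'$ preserve the $\ms G$-structure. Using moreover that $\chi$ preserves brackets and that coisotropy of $V_p$ gives $\rho'\bigl((\hat d'f\otimes n)_{E'}\bigr)=0$ (since $\hat d'f$ lies in $\on{Im}((\rho')^t)=(\on{Ker}\rho')^\circ$ and $\bigl((\on{Ker}\rho')^\circ\otimes N\bigr)_E\subset\on{Ker}\rho'$), one checks the anchor-morphism identity $\rho'([u,v]')=[\rho'(u),\rho'(v)]$ directly from the formulas.

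It then remains to establish the Jacobi identity \eqref{eq:jacobi}, which I would recast as $[L_u,L_v]=L_{[u,v]'}$ for $L_u:=[u,\cdot]'$. By the anchor-morphism identity both operators are derivations over the same vector field, so their difference $\Delta(u,v)$ is $C^\infty(M')$-linear in its argument, i.e.\ a section of $\on{End}(E')$, and on the constant frame it reduces to the elgebra Jacobi identity, hence vanishes. To conclude $\Delta\equiv 0$ one shows that $\Delta$ is moreover $C^\infty(M')$-bilinear in $(u,v)$: the failure of bilinearity is a sum of correction terms built from the endomorphisms $\pi(\hat d'f\otimes u)$, and these cancel using coisotropy of the stabilisers (the $\pi'$-part of such an endomorphism maps into $\on{Ker}\rho'$), the explicit form $\pi=1-\pi'$, and the fact that each $L_u$ commutes with all $\ms G$-equivariant structure maps (as $\nabla^0$ and $A_u$ do). Being $C^\infty(M')$-trilinear and vanishing on the constant frame, $\Delta$ vanishes identically. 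Finally, M-exactness of the resulting elgebroid is immediate: $\dim M'=n$ by hypothesis, transitivity gives exactness at $TM'$, and co-Lagrangianity together with Lemma~\ref{lem:colagexc} gives exactness at $E'$. I expect the cancellation of this non-tensorial defect in the Jacobi identity to be the main obstacle -- and the natural candidate for an Appendix calculation -- since it is the one place where admissibility, the precise map $\pi'$ (whose components form the $Y$-tensor), and coisotropy of the stabilisers are all genuinely needed; note that only coisotropy, not the full co-Lagrangian condition, is required to produce a pullback elgebroid, the stronger condition being what makes it M-exact.
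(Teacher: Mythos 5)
Your ``only if'' direction and the construction/verification of the pre-elgebroid axioms (forced form of the bracket and $\mc D'$, use of admissibility for $\ms G$-structure preservation, coisotropy for the anchor-morphism identity) match the paper's argument. The genuine gap is exactly at the step you yourself flag: the cancellation of the non-tensorial defect in the Jacobi identity is asserted, not proved. You claim that $C^\infty(M')$-bilinearity of $\Delta(u,v)=[L_u,L_v]-L_{[u,v]'}$ in $(u,v)$ follows from ``coisotropy, $\pi=1-\pi'$, and equivariance,'' but this is precisely the computation on which the whole theorem hinges, and it is not at all clear it goes through at that level of generality. The paper does not attempt such an abstract cancellation: it first proves a local normal form for M-exact pre-elgebroids (Lemma \ref{lem:large}, which rests on the co-Lagrangian property of $\on{Ker}\rho$ via Proposition \ref{prop:pairs} and an isotropic splitting), then checks by explicit computation with the resulting bracket \eqref{eq:long} that the Jacobiator is tensorial (Lemma \ref{lem:jacobiator}), and only then concludes via Corollary \ref{cor:preelgtoelg} that vanishing on constant sections suffices. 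In other words, M-exactness -- not just coisotropy of the stabilisers -- enters the Jacobi verification itself, not merely the bookkeeping of exactness at the end.

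This also undercuts your closing remark that ``only coisotropy, not the full co-Lagrangian condition, is required to produce a pullback elgebroid.'' The paper deliberately avoids that claim: it states only that coisotropy is \emph{necessary}, and that sufficiency is known for Lie and Courant algebroids, leaving the general elgebroid case open. The situation is analogous to pre-Courant algebroids, where the Jacobiator is tensorial (a $4$-form) in the exact case but need not be tensorial in general; here, tensoriality is established only for M-exact pre-elgebroids and only through the normal form. To complete your route you would have to actually carry out the bilinearity computation for $\Delta$ -- e.g.\ control terms of the shape $\pi(\hat d'f\otimes u)[v,w]'-[\pi(\hat d'f\otimes u)v,w]'-[v,\pi(\hat d'f\otimes u)w]'+\pi(\hat d'f\otimes[u,v]')w-(\rho'(v)f)\,[u,w]'$ -- and it is doubtful this closes without invoking the explicit $\mf e_{n(n)}\oplus\R$ structure exactly as the Appendix does; absent that calculation the ``if'' direction is unproved.
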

    The proof can be found in Appendix \ref{ap:proof_action}. Note that an analogous result was obtained (using different methods) in \cite{Inverso}.
    
    More generally, a necessary condition for an action of a $\ms G$-algebra to define a $\ms G$-algebroid is that the stabilisers are coisotropic. Setups where the coisotropy condition is also sufficient include Lie and Courant algebroids \cite{LBM}.

\section{Classification of exact Leibniz parallelisable elgebroids}\label{sec:class}
    Let $E$ be an elgebra. Since \[[\on{Im}(\mc D),E]=0,\quad [E,\on{Im}(\mc D)]\subset \on{Im}(\mc D),\] 
    we have that $\on{Im}(\mc D)\subset E$ is a two-sided ideal, and so we can construct a Lie algebra \[\mf g_E:=p(E),\] where $p$ is the projection $E\to E/\on{Im}(\mc D)$.
    More generally, if $V\subset E$ is a subalgebra then $\mf g_V:=p(V)\subset \mf g_E$ is a Lie subalgebra. Conversely, if $\mf h\subset \mf g_E$ is a Lie subalgebra, then $p^{-1}(\mf h)\subset E$ is a subalgebra. 
    
    We shall denote the 1-connected Lie group corresponding to $\mf g_E$ by $\ms G_E$, and we will denote by $\ms G_V\subset \ms G_E$ a subgroup corresponding to $\mf g_V$ for the subalgebra $V\subset E$.
    Note that both $E$ and $N$ are $\ms G_E$-modules and the action of $\ms G_E$ preserves the bracket, the map $\mc D$, as well as the maps between $S^2E$ and $N$.
\begin{thm}
    Let $E$ be an elgebra and $V\subset E$ a co-Lagrangian subalgebra of codimension $n$, satisfying $\on{Im}\mc D\subset V$.
    Suppose $\ms G_V\subset \ms G_E$ is closed. The natural action of $E$ on $M':=\ms G_E/\ms G_V$ then gives rise to an M-exact Leibniz parallelisable elgebroid over $M'$. Every M-exact Leibniz parallelisable elgebroid over a connected compact base arises in this way, for some pair $(E,V)$.\footnote{Strictly speaking, we also need to make a choice of the subgroup $\ms G_V\subset \ms G_E$, corresponding to a fixed Lie algebra $\mf g_V$ (this is a discrete choice). For example, when $\mf g_V=0$, we can take $\ms G_V$ to be a discrete subgroup of $\ms G_E$.}
\end{thm}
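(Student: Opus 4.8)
The plan is to prove the two directions separately: the construction is a direct application of Theorem~\ref{thm:action}, whereas the converse requires integrating an infinitesimal action to a group action, which is where the compactness and connectedness of the base are used. Throughout, write $p\colon E\to\mf g_E=E/\on{Im}\mc D$ for the projection; recall that the bracket of $E$ descends to the Lie bracket of $\mf g_E$, that $\mf g_E$ acts on $E$ (and $N$) via $p(u)\cdot v=[u,v]$, that this integrates to a $\ms G_E$-action preserving the bracket and the maps between $S^2E$ and $N$, and that $p$ is then $\ms G_E$-equivariant.

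For the construction, put $M'=\ms G_E/\ms G_V$ and let $\chi\colon E\to\Gamma(TM')$ be the composite of $p$ with the infinitesimal-generator map of the left $\ms G_E$-action on $M'$ (with the sign convention making the latter a morphism of brackets). Since $p$ intertwines brackets and $\on{Im}\mc D=\ker p$, the map $\chi$ is a genuine action of the elgebra $E$ satisfying $\chi\circ\mc D=0$; it is transitive because $\ms G_E$ acts transitively on $M'$ and $p$ is surjective, and $\dim M'=\dim\mf g_E-\dim\mf g_V=\dim E-\dim V=n$. The stabiliser of $\chi$ at the base point is $p^{-1}(\mf g_V)=V$ (using $\on{Im}\mc D\subset V$), which is co-Lagrangian of codimension $n$ by hypothesis; the stabiliser at a general point $gH$ is $g\cdot V$, again co-Lagrangian of codimension $n$ since $\ms G_E$ acts linearly and preserves the bracket and the maps between $S^2E$ and $N$. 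Theorem~\ref{thm:action} then yields an M-exact elgebroid on $M'\times E$, which is Leibniz parallelisable because it is by construction a pullback of the elgebra $E$.

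For the converse, let $E'\to M'$ be an M-exact Leibniz parallelisable elgebroid with $M'$ connected and compact, so $E'=\varphi^*E$ for an elgebra $E$ and $\varphi\colon M'\to\mathrm{pt}$. Define $\chi\colon E\to\Gamma(TM')$, $\chi(u)=\rho'(\varphi^*u)$: it preserves brackets by Lemma~\ref{lem:basic}\eqref{eq:morphism}, it is pointwise surjective since $\rho'$ is surjective by M-exactness, and it annihilates $\on{Im}\mc D$ by Lemma~\ref{lem:basic}\eqref{eq:coiso}; hence it descends to $\bar\chi\colon\mf g_E\to\Gamma(TM')$. The key step is to integrate $\bar\chi$ to a group action: as $M'$ is compact, every vector field on it is complete, so the standard integrability theorem for Lie-algebra actions by complete vector fields produces a left action of the $1$-connected group $\ms G_E$ on $M'$ with infinitesimal generators $\bar\chi$. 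Infinitesimal transitivity makes each orbit open, and $M'$ connected forces a single orbit, so $M'\cong\ms G_E/\ms G_V$ with $\ms G_V$ the (automatically closed) stabiliser of a chosen base point $x_0$, whose Lie algebra is $\mf g_V:=p(V)$ for $V:=\ker\rho'_{x_0}\subset E$. By Theorem~\ref{thm:action} (equivalently, by exactness of \eqref{eq:seq} together with Lemma~\ref{lem:colagexc}) the subalgebra $V$ is co-Lagrangian of codimension $n$, and $\on{Im}\mc D\subset V$ by Lemma~\ref{lem:basic}\eqref{eq:coiso}, so $(E,V)$ satisfies the hypotheses of the first part, with $p^{-1}(\mf g_V)=V$.

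It remains to identify $E'$ with the elgebroid attached to $(E,V)$ in the first part. Both are pullbacks of $E$ along $\varphi$, and by the remark following the definition of pullback such a structure is determined by its anchor; since both anchors restrict to $u\mapsto\chi(u)$ on constant sections and are $C^\infty(M')$-linear, they coincide, so the elgebroids agree. As noted in the footnote to the statement, $\ms G_V$ need not be connected, so the pair $(E,V)$ must be supplemented by the discrete choice of a closed subgroup of $\ms G_E$ with Lie algebra $\mf g_V$; with this understood the correspondence is as claimed. The main obstacle is the integration step in the converse: passing from the infinitesimal $\mf g_E$-action to an actual homogeneous-space presentation of $M'$, which is exactly where completeness (from compactness) and the single-orbit argument (from connectedness) enter.
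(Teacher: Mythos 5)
Your proposal is correct and follows essentially the same route as the paper: the forward direction checks the hypotheses of Theorem~\ref{thm:action} for the infinitesimal action induced by $p$ on $\ms G_E/\ms G_V$ (with stabilisers $g\cdot V$), and the converse descends the anchor to a transitive $\mf g_E$-action and presents $M'$ as $\ms G_E/\ms G_V$. The only difference is that you spell out the steps the paper leaves implicit (completeness on a compact base, Palais-type integration, open orbits plus connectedness, and uniqueness of the pullback via its anchor), which is a faithful elaboration rather than a different argument.
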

\begin{proof}
    The Lie algebra $\mf g_E$ acts on $\ms G_E/\ms G_V$, with the stabiliser at point $[g^{-1}]$ given by $\on{Ad}_g\mf g_V$. This lifts to an action $\chi$ of $E$ on $M'$, with the stabiliser $g\cdot V$, where $g\cdot$ denotes the action of $g\in\ms G_E$. We see that $g\cdot V$ is co-Lagrangian iff $V$ is.
    
    Conversely, if $E'\to M'$ is M-exact and Leibniz parallelisable, coming from some elgebra $E$, then the anchor gives a transitive action of $\mf g_E$ on $M'$ (because $\on{Im}\mc D$ acts trivially). Since $M'$ is compact, $M'=\ms G_E/\ms H$ for some $\mf h\subset \mf g_E$, yielding $V=p^{-1}(\mf h)$.
\end{proof}

Using this result, a classification of Leibniz parallelisable M-exact elgebroids translates into a tractable algebraic problem and thus becomes an achievable goal. (It might still require some case-to-case analysis.) We leave this problem to a later work.

More generally, one easily sees that any transitive/exact Leibniz parallelisable $\ms G$-algebroid over a connected compact base arises from some pair of a $\ms G$-algebra and a coisotropic/co-Lagrangian subalgebra thereof. (However, in general not every such pair gives rise to a $\ms G$-algebroid.)

\section{Poisson--Lie duality}
We now use pullbacks to define a general notion of Poisson--Lie duality, extending the definition from \cite{SV0}.
\begin{defn}
    A pair of exact $\ms G$-algebroids, which are both pullbacks of a given $\ms G$-algebroid $E\to M$, are said to be (mutually) \emph{Poisson--Lie dual}.
    If $M\neq\on{pt}$ and $M=\on{pt}$, this is a Poisson--Lie duality \emph{with} and \emph{without spectators}, respectively. (The manifold $M$ is called the \emph{manifold of spectators}.)
\end{defn}
\begin{example}
    In the Courant algebroid case we recover the \emph{Poisson--Lie T-duality} of \cite{KS}, while the exceptional case gives the \emph{Poisson--Lie U-duality}, introduced in the case without spectators in \cite{Sakatani,MT}.
\end{example}
Let us now discuss some examples of Poisson--Lie duality without spectators. This corresponds to pairs of Leibniz parallelisable exact $\ms G$-algebroids arising from the same $\ms G$-algebra $E$, but different co-Lagrangian subalgebras $V$.
\begin{example}
    In the Lie algebroid case, the Poisson--Lie duality without spectators is trivial in the sense that any given $\ms G$-algebra admits a unique (trivial) co-Lagrangian $V$.
\end{example}
\begin{example}
    In the Courant algebroid case, the Poisson--Lie duality without spectators corresponds to different choices of Lagrangian subalgebras $\mf h,\mf h'\subset \mf g$. In the special case when $\mf h\cap \mf h'=0$, the corresponding groups $\ms H$ and $\ms H'$ carry compatible Poisson structures, i.e.\ they are Poisson--Lie groups. This is the origin of the term ``Poisson--Lie (T-)duality''.
\end{example}

\section{Examples of Leibniz parallelisable M-exact elgebroids}
        We now provide a short list of examples of Leibniz parallelisable M-exact elgebroids (resp.\ exceptional tangent bundles), arising from a pair of an elgebra $E$ and its co-Lagrangian subalgebra $V$ of codimension $n$, satisfying $\on{Im}\mc D\subset V$.
    \subsection{U-duality}    
        The simplest case is the one with $E$ an abelian Lie algebra, with $\mc D=0$. Taking $\ms G_E$ to be the $(\dim E)$-dimensional torus and $V$ to be a co-Lagrangian subspace of codimension $n$ corresponding to a closed sub-torus $\ms G_V\subset \ms G_E$, gives rise to a Leibniz parallelisable exceptional tangent bundle on the $n$-dimensional torus $T^n\cong\ms G_E/\ms G_V$. Different $V$'s are related by an $\ms E_{n(n)}\times\R^+$-transformation and give rise to Poisson--Lie dual setups. This is the standard \emph{U-duality} from string theory. Note that although the dual spaces will be isomorphic as manifolds, equipping $E$ with a generalised metric (see the next Section) will result in different sets of geometric data.
    \subsection{Group manifolds}\label{sec:grp}
        More generally, a class of examples of Leibniz parallelisable exceptional tangent bundles (known as ``Scherk--Schwarz'' reductions in the physics literature) arises from group manifolds, using the trivialisation of the tangent bundle by left-invariant vector fields~\cite{CSCW}. The corresponding pair $(E,V)$ is given as follows.
            
        Consider a Lie algebra $\mf k$, corresponding to a Lie group $K$, with $\dim\mf k\in\{3,\dots,6\}$, and take
        \[E:=\mf k\oplus \w{2}\mf k^*\oplus\w{5}\mf k^*, \quad V:=\w{2}\mf k^*\oplus\w{5}\mf k^*,\]
        \[[X+\sigma_2+\sigma_5,X'+\sigma_2'+\sigma_5']=\on{ad}_X X'+(\on{ad}_X \sigma_2'-i_{X'}\delta\sigma_2)+(\on{ad}_X \sigma_5'-i_{X'}\delta\sigma_5-\sigma_2'\wedge \delta\sigma_2),\]
        where $\delta$ is the Chevalley-Eilenberg differential and $\on{ad}$ denotes the action of $\mf k$ (on $\mf k$ and on $\w{\bullet} \mf k^*$). 
            
        This can be modified by taking an arbitrary pair of elements $F_1\in \mf k^*$, $F_4\in\w{4}\mf k^*$, satisfying $\delta F_1=0$ and $\delta F_4+F_1\wedge F_4=0$, and using the analogue of formula \eqref{eq:long}. As a result we again obtain an M-exact elgebroid on $K$.
        
        More generally, a rich class of Leibniz parallelisations on groups equipped with non-invariant structures has been constructed and studied in \cite{Sakatani, MT}. This provided one of the motivations for the present work.
        
    \subsection{4-sphere}
        Let us now describe how the example of $S^4$ from \cite{LSCW} fits in the present framework. 
        
        First, recall that in the $n=4$ case we have
        $E\cong \w{2} V_5$, $N\cong\w{4} V_5$, for $V_5:=\R^5$,
        with the maps $S^2E\to N$ and $S^2 E^*\to N^*$ given by wedging. Every co-Lagrangian of codimension 4 is of the form $\w{2} V_4$ for some 4-dimensional subspace $V_4\subset V_5$.
        
        A natural example is thus given by the Lie algebra case
        \[E:=\mf{so}(5),\qquad V:=\mf{so}(4),\]
        which produces a Leibniz parallelisable M-exact elgebroid over $S^4\cong \ms{SO}(5)/\ms{SO}(4)$.
%
        
\section{Elgebroids and supergravity}\label{sec:sugra}
We now turn to applying the elgebroid framework to the study of supergravities given by a restriction of the 11-dimensional supergravity to lower dimensions, following \cite{CSCW}.
\subsection{Connections and torsion}
    \begin{defn}
        Let $E\to M$ be a $\ms G$-algebroid.
        A \emph{generalised connection} on $E$ is a map 
        \[\nabla\colon \Gamma(E)\times \Gamma(E)\to\Gamma(E),\quad (u,v)\mapsto \nabla_{\!u} v,\]
        satisfying
        \[\nabla_{\!fu}v=f\nabla_{\!u} v,\qquad \nabla_{\!u} fv=f\nabla_{\!u} v+(\rho(u)f)v,\]
        and such that $\nabla_{\!u}$ preserves the $\ms G$-structure for every $u\in\Gamma(E)$.
    \end{defn}
    \begin{defn}
        The \emph{torsion} of a generalised connection on $E$ is the map 
        \[T_\nabla\colon \Gamma(E)\times \Gamma(E)\to \Gamma(E),\quad T_\nabla(u,v)=\nabla_{\!u} v-\nabla_{\!v} u-[u,v]+((\nabla u\otimes v)_N)_E,\]
        where we understand $\nabla u$ as a section of $E^*\otimes E$.
    \end{defn}
    \begin{prop}
        Torsion is $C^\infty(M)$-bilinear, i.e.\ $T_\nabla\in\Gamma(E^*\otimes E^*\otimes E)$.
    \end{prop}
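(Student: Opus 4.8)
The plan is to verify $C^\infty(M)$-linearity of $T_\nabla$ in each of its two arguments separately; by the standard tensoriality criterion this then yields $T_\nabla\in\Gamma(E^*\otimes E^*\otimes E)$. Note that $T_\nabla$ is not antisymmetric (neither $[\cdot,\cdot]$ nor the summand $((\nabla u\otimes v)_N)_E$ is), so both slots genuinely have to be checked.

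First I would treat the second argument. Replacing $v$ by $fv$ and using the connection axioms $\nabla_{\!u}(fv)=f\nabla_{\!u}v+(\rho(u)f)v$ and $\nabla_{\!fv}u=f\nabla_{\!v}u$, together with \eqref{eq:anchor} in the form $[u,fv]=f[u,v]+(\rho(u)f)v$, and the fact that $((\nabla u\otimes fv)_N)_E=f\,((\nabla u\otimes v)_N)_E$ (the maps $S^2E\to N$ and $E^*\otimes N\to E$ being vector bundle morphisms, hence $C^\infty(M)$-linear), the two occurrences of $(\rho(u)f)v$ cancel and one obtains $T_\nabla(u,fv)=f\,T_\nabla(u,v)$.

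Next I would treat the first argument, which carries the content. Replacing $u$ by $fu$: the first connection axiom gives $\nabla_{\!fu}v=f\nabla_{\!u}v$, the second gives $\nabla_{\!v}(fu)=f\nabla_{\!v}u+(\rho(v)f)u$, and Lemma~\ref{lem:basic}\eqref{eq:first_slot} gives $[fu,v]=f[u,v]-\pi(\dh f\otimes u)v$. Regarding $\nabla u$ as a section of $E^*\otimes E$, one has $\nabla(fu)=f\,\nabla u+\dh f\otimes u$ (since $(\nabla(fu))(w)=\nabla_{\!w}(fu)=f\nabla_{\!w}u+\langle\dh f,w\rangle u$), hence $((\nabla(fu)\otimes v)_N)_E=f\,((\nabla u\otimes v)_N)_E+((\dh f\otimes u\otimes v)_N)_E$. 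Collecting the terms not manifestly proportional to $f$, the leftover anomaly is
\[-(\rho(v)f)u+\pi(\dh f\otimes u)v+((\dh f\otimes u\otimes v)_N)_E.\]
I would then invoke the identity established inside the proof of Lemma~\ref{lem:basic}\eqref{eq:first_slot}, namely $((\dh f\otimes u\otimes v)_N)_E=\langle\dh f,v\rangle u-\pi(\dh f\otimes u)v$ (this is exactly the displayed computation there; it is purely algebraic and encodes that $\pi'=1-\pi$ is the partial dual of $E\otimes E\to S^2E\to N\to S^2E\to E\otimes E$), together with $\langle\dh f,v\rangle=\rho(v)f$. The three terms then cancel, giving $T_\nabla(fu,v)=f\,T_\nabla(u,v)$, and combining with the previous paragraph finishes the proof.

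The only genuinely non-mechanical point is this last cancellation: one must recognise that the failure of $v\mapsto\nabla v$ to be $C^\infty(M)$-linear, hidden inside $((\nabla u\otimes v)_N)_E$, contributes precisely the term $((\dh f\otimes u\otimes v)_N)_E$, which --- by the very definition of $\pi$ --- is exactly what is needed to absorb both the anchor term from $\nabla_{\!v}(fu)$ and the $\pi$-correction to the bracket $[fu,v]$. This is the reason the summand $((\nabla u\otimes v)_N)_E$ must appear in the definition of $T_\nabla$ at all: without it, $\nabla_{\!u}v-\nabla_{\!v}u-[u,v]$ already fails to be tensorial, by exactly that term.
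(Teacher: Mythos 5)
Your proof is correct and follows essentially the same route as the paper, whose proof simply cites \eqref{eq:anchor} and Lemma \ref{lem:basic}\eqref{eq:first_slot}; you have merely written out the verification, including the key algebraic identity $((\dh f\otimes u\otimes v)_N)_E=\la\dh f,v\ra u-\pi(\dh f\otimes u)v$ already displayed in the proof of that lemma.
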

    \begin{proof}
        Follows immediately from \eqref{eq:anchor} and Part \eqref{eq:first_slot} of Lemma \ref{lem:basic}.
    \end{proof}
\subsection{Generalised metric and torsion-free compatible connections}
    
    Let us now specialise to the exceptional case. We start by recalling the construction from \cite{CSCW}. First, let $\ms K$ be the double cover of the maximal compact subgroup of $\ms G$, see Appendix \ref{sec:table}.
    
    \begin{defn}
        A \emph{generalised metric} is a reduction of the $\ms G$-structure on $E$ to a $\ms K$-structure.
    \end{defn}
    Physically, a generalised metric on an M-exact elgebroid encodes the bosonic field content of the lower-dimensional supergravity.\footnote{A generalised metric strictly only defines a ${\ms K}/\Z_2$-structure. However, since in what follows we will want to consider the exceptional group analogues of spinor representations we here use the stronger $\ms K$-structure definition.} 
    \begin{defn} 
        A \emph{compatible connection} is a generalised connection preserving the generalised metric.
    \end{defn}
    \begin{defn}
        A generalised metric is called \emph{torsion-free} if it admits a torsion-free compatible connection, i.e.\ if there is a compatible connection $\nabla$ with $T_\nabla=0$.
    \end{defn}
    The space of compatible connections is affine, over $\Gamma(E^*\otimes\on{ad}(\ms K))$, where $\on{ad}(\ms K)$ is the adjoint bundle corresponding to $\ms K$. Consider the vector bundle map
    \begin{align*}
        \lambda\colon E^*\otimes\on{ad}(\ms K)\to E^*\otimes E^*\otimes E,\qquad a\mapsto T_{\nabla+a}-T_\nabla,
    \end{align*}
    where $\nabla$ is a compatible connection ($\lambda(a)$ is independent of the choice of $\nabla$). 
    If a generalised metric is torsion-free, torsion-free compatible connections form an affine space over $\Gamma(\on{Ker}\lambda)$.
    Finally note that if $\nabla$ is a compatible connection then $\nabla_{\!u}$ acts also on any vector bundle associated to the generalised metric.
    
    Suppose now that we have a torsion-free generalised metric on an elgebroid $E$ and let $X$ be a bundle associated to some representation of $\ms K$. The action of $\on{ad}(\ms K)$ on $X$ induces the map
    \[\lambda_X\colon \on{Ker} \lambda\otimes X\to E^*\otimes \on{ad}(\ms K)\otimes X\to E^*\otimes X,\]
    which in turn gives us the projection $\mc P_X\colon E^*\otimes X\to (E^*\otimes X)/\on{Im}\lambda_X$.
    By construction we then have that 
    \[\mc P_X\circ \nabla\colon \Gamma(X)\to \Gamma((E^*\otimes X)/\on{Im}\lambda_X)\]
    is independent of the choice of the torsion-free connection $\nabla$.
\subsection{Curvature}
    For every $n\in\{4,\dots,6\}$ there are two important representations, labelled $S$ and $J$ in Appendix \ref{sec:table}, known as the \emph{spinor} and \emph{gravitino} representations, respectively.\footnote{For simplicity we are here excluding the $n=3$ case, which is somewhat simpler but does not respect the following pattern, on account of $\mc P_X$ being the identity for all $X$.} Notably, we have that the codomain of both $\mc P_S$ and $\mc P_J$ can be identified with $S\oplus J$. We thus have a map
    \[\mc P:=\mc P_S+\mc P_J\colon E^*\otimes(S\oplus J)\to S\oplus J.\]
    
    \begin{defn}
        Let $E$ be an elgebroid, with a torsion-free generalised metric and a compatible torsion-free connection $\nabla$. The \emph{generalised Ricci curvature} is the map 
        \[\mc R\colon \Gamma(S)\to\Gamma(S\oplus J),\qquad \epsilon\mapsto \mc P\circ \nabla\circ \mc P\circ\nabla \epsilon.\]
    \end{defn}
    It is immediately apparent, from the discussion above, that $\mc R$ is independent of the choice of $\nabla$.
    
    The following is proven in \cite{CSCW}. (We are here also using Theorem \ref{thm:class}.)
    \begin{prop}
        Suppose $E$ is an M-exact elgebroid with a generalised metric. Then the torsion of any compatible connection is in $\on{Im}\lambda$. Thus all generalised metrics on M-exact elgebroids are torsion-free.
    \end{prop}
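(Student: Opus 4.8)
The plan is to reduce the claim to the $\ms E_{n(n)}\times\R^+$ exceptional generalised geometry of the exceptional tangent bundle, where the relevant existence statement is already available, using Theorem \ref{thm:class} as the link.

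The starting observation is that the two assertions are essentially the same. For any two compatible connections $\nabla$ and $\nabla'$ their difference $a:=\nabla'-\nabla$ is a section of $E^*\otimes\on{ad}(\ms K)$, and by the very definition of $\lambda$ we have $T_{\nabla'}-T_\nabla=\lambda(a)$. Hence the torsions of all compatible connections lie in a single coset $T_\nabla+\on{Im}\lambda$ inside $\Gamma(E^*\otimes E^*\otimes E)$; so the torsion of \emph{any} compatible connection lies in $\on{Im}\lambda$ as soon as the torsion of \emph{one} of them does, and in that case, choosing $a$ with $\lambda(a)=T_\nabla$, the connection $\nabla-a$ is compatible and torsion-free. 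Since $\lambda$ is a $\ms K$-equivariant bundle map, $\on{Im}\lambda$ is a subbundle of $E^*\otimes E^*\otimes E$ and such an $a$ can be chosen smoothly; and a global compatible connection exists by the usual partition-of-unity argument (the sheaf of compatible connections is a torsor over the sheaf of sections of $E^*\otimes\on{ad}(\ms K)$). It therefore suffices to prove that $T_\nabla\in\on{Im}\lambda$ for one compatible connection $\nabla$.

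This last condition is pointwise (membership in a subbundle), so it can be checked in a neighbourhood of each point of $M$. By Theorem \ref{thm:class}, near any point an M-exact elgebroid is isomorphic to the exceptional tangent bundle $E=TM\oplus\w{2}T^*M\oplus\w{5}T^*M$ of Example \ref{ex:exc}, and under this isomorphism the generalised metric, the compatible connections, the torsion and the map $\lambda$ all correspond to their counterparts in $\ms E_{n(n)}\times\R^+$ exceptional generalised geometry. In that setting the existence of a compatible \emph{torsion-free} generalised connection is precisely the result established in \cite{CSCW}. Transporting such a connection back along the local isomorphism produces, near the chosen point, a compatible connection $\tilde\nabla$ with $T_{\tilde\nabla}=0\in\on{Im}\lambda$; combining this with the coset observation above gives $T_\nabla\in\on{Im}\lambda$ there for every compatible $\nabla$, hence globally. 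The ``thus'' clause then follows from the first paragraph: writing $T_\nabla=\lambda(a)$ for a global compatible $\nabla$, the connection $\nabla-a$ is a global torsion-free compatible connection, so the generalised metric is torsion-free.

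The only genuinely nontrivial ingredient is the $\ms E_{n(n)}\times\R^+$ representation theory underlying \cite{CSCW} — the vanishing of the intrinsic torsion of a generalised metric for $n\le 6$, i.e.\ that $\lambda$ surjects onto the space in which the torsion of a compatible connection can sit — and this is the step we import rather than reprove; everything else is formal once Theorem \ref{thm:class} is in hand.
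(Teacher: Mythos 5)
Your proposal is correct and takes essentially the same route as the paper: the paper's entire justification is the remark that the result ``is proven in \cite{CSCW}'', combined with Theorem \ref{thm:class} to reduce locally to the exceptional tangent bundle, which is precisely your strategy. The coset argument, the pointwise/subbundle observation, and the partition-of-unity existence of a global compatible connection that you spell out are just the formal glue the paper leaves implicit.
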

    \begin{prop}
        For any generalised metric on an M-exact elgebroid, the Ricci curvature is a tensor, i.e.\ $\mc R\in\Gamma(S^*\otimes (S\oplus J))$.
    \end{prop}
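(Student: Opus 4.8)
The plan is to show that $\mc R$ is $C^\infty(M)$-linear; since it is manifestly $\R$-linear, this is precisely the assertion $\mc R\in\Gamma(S^*\otimes(S\oplus J))$. Fix a compatible torsion-free connection $\nabla$ — one exists, because every generalised metric on an M-exact elgebroid is torsion-free (the preceding Proposition) — and abbreviate $D:=\mc P\circ\nabla$, regarded as an operator on $\Gamma(S)$ and on $\Gamma(S\oplus J)$, so that $\mc R=D\circ D$. The Leibniz rule $\nabla_{\!u}(f\epsilon)=f\nabla_{\!u}\epsilon+\langle\dh f,u\rangle\epsilon$, together with the fact that $\mc P$ restricted to $E^*\otimes S\subset E^*\otimes(S\oplus J)$ coincides with $\mc P_S$, gives $D(f\epsilon)=fD\epsilon+\mc P_S(\dh f\otimes\epsilon)$, and likewise $D(f\eta)=fD\eta+\mc P(\dh f\otimes\eta)$ for $\eta\in\Gamma(S\oplus J)$; here $\dh f=\rho^t(df)$ is a section of $(\on{Ker}\rho)^\circ$. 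Expanding $\mc R(f\epsilon)=D\big(D(f\epsilon)\big)$ and using that $\mc P$, $\mc P_S$, $\mc P_J$ are $\nabla$-parallel (each is $\ms K$-equivariant and $\nabla$ is compatible with the $\ms K$-structure), one obtains $\mc R(f\epsilon)=f\,\mc R(\epsilon)+A(f,\epsilon)$, where the anomaly $A(f,\epsilon)$ is a sum of terms built from $\dh f\otimes\nabla\epsilon$ and from $\nabla\dh f\otimes\epsilon$. Tensoriality is then equivalent to $A\equiv 0$.

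Showing $A(f,\epsilon)=0$ is the heart of the matter. It is a pointwise statement, hence a collection of symbol identities, and since the two kinds of summand in $A$ have different jet type in $f$ — one first order, the other involving the second-order object $\nabla\dh f$ — they must vanish separately. For the second-order part one notes that the symbol of $f\mapsto\nabla\dh f$ takes values in $S^2\big((\on{Ker}\rho)^\circ\big)$ (essentially the chain rule, with Lemma \ref{lem:basic} ensuring the symmetry of the leading term), and then checks that $\mc P\circ(\on{id}_{E^*}\otimes\mc P_S)$ annihilates the image of $S^2\big((\on{Ker}\rho)^\circ\big)\otimes S$ inside $E^*\otimes(S\oplus J)$. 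For the first-order part one uses the connection-independence of $D$: shifting $\nabla$ to $\nabla+a$ with $a\in\Gamma(\on{Ker}\lambda)$ leaves $D$ unchanged, which forces $\mc P_S$ and $\mc P_J$ to annihilate $\on{Im}\lambda_S$ and $\on{Im}\lambda_J$ respectively; one then verifies that the surviving first-order terms in $A$, after the inner projection, lie in the subspace of $E^*\otimes(S\oplus J)$ killed by the outer $\mc P$. By Theorem \ref{thm:class} one may perform all these checks in the local exceptional tangent bundle model, where $(\on{Ker}\rho)^\circ\cong T^*M$, so that they become finite-dimensional statements about the $\ms K$-module structure of $E$, $N$, $S$ and $J$.

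The main obstacle is exactly this last, representation-theoretic step: it requires the decompositions of $E^*\otimes S$, $E^*\otimes J$ and $S^2\big((\on{Ker}\rho)^\circ\big)\otimes S$, and a book-keeping of which irreducible components are reached by $\lambda_S$, $\lambda_J$ and by $\mc P$, so in practice the identities are verified case by case for $n\in\{4,5,6\}$. (The case $n=3$ does not occur, since $S$, $J$ and hence $\mc R$ are defined only for $n\ge 4$.) Equivalently — because an M-exact elgebroid is locally the exceptional tangent bundle — this is the tensoriality of the generalised Ricci tensor established in \cite{CSCW}, which one may simply cite.
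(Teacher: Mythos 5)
Your proposal ends exactly where the paper does: the paper gives no independent argument but invokes Theorem \ref{thm:class} to reduce an M-exact elgebroid locally to the exceptional tangent bundle and then cites \cite{CSCW} for tensoriality, which is precisely your closing reduction (your preceding sketch of the $C^\infty(M)$-linearity anomaly and its representation-theoretic cancellation is a reasonable outline of what that cited verification involves). One small correction: $S$ and $J$ are in fact defined for $n=3$ (see the table in Appendix \ref{sec:table}); that case is excluded from the curvature discussion only because $\mc P_X$ is the identity there, not because the representations fail to exist.
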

    On an M-exact elgebroid the vanishing of this tensor corresponds to the equations of motion of the corresponding supergravity on $M$. A solution to these equations gives rise, after taking a product of $M$ with a Minkowski space, to a solution to the equations of 11-dimensional supergravity.
    
\subsection{Compatibility of Poisson--Lie U-duality and supergravity}
    Note that generalised metrics can be always pulled back, via pullbacks of elgebroids.
    \begin{thm}
        Suppose $E'\to M'$ is an M-exact pullback elgebroid of some $E\to M$ along a surjective submersion. Suppose that there is a generalised metric on $E$, inducing a generalised metric on $E'$. Then the generalised Ricci tensor on $E'$ vanishes iff the generalised Ricci tensor on $E$ vanishes.
    \end{thm}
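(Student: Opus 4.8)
\emph{Strategy.} The plan is to prove the sharper statement that the generalised Ricci tensor pulls back, $\mc R'(\varphi^*\epsilon)=\varphi^*(\mc R\,\epsilon)$ for every $\epsilon\in\Gamma(S)$, under the canonical identification $S'\oplus J'=\varphi^*(S\oplus J)$ coming from $E'=\varphi^*E$ and the pullback $\ms K$-structure. Granting this, the theorem follows: if $\mc R=0$ then $\mc R'$ vanishes on all sections of the form $\varphi^*\epsilon$, which span $S'$ fibrewise, and since $\mc R'$ is a tensor (here we use that $E'$ is M-exact) we get $\mc R'=0$; conversely, if $\mc R'=0$ then $\varphi^*(\mc R\,\epsilon)=0$ for all $\epsilon$, and $\varphi^*$ is injective on sections because $\varphi$ is surjective, so $\mc R=0$. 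This mirrors the Courant algebroid / Poisson--Lie T-duality argument of \cite{SV0,SV}.

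\emph{Step 1 (pullback connection).} Fix a torsion-free compatible connection $\nabla$ on $E$ — one with respect to which $\mc R$ is computed; recall that $\mc P\circ\nabla$ does not depend on this choice. I would first construct a torsion-free compatible connection $\nabla'$ on $E'$ with the property that $\nabla'(\varphi^*w)=\varphi^*(\nabla w)$ for every section $w$ of $E$, or indeed of any bundle associated to the generalised metric. Concretely: in a local $\ms K$-frame $e_\alpha$ of $E$ the family $e'_\alpha:=\varphi^*e_\alpha$ is a local $\ms K$-frame of $E'$ (the $\ms K$-structure on $E'$ being the pullback one), and one declares the connection coefficients of $\nabla'$ in this frame to be the $\varphi$-pullbacks of those of $\nabla$. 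Since the latter take values in the Lie algebra of $\ms K$, the resulting $\nabla'$ preserves the $\ms K$-structure; since the transition functions of the $\ms K$-structure of $E'$ are pulled back from those of $E$, the local definitions glue to a global $\nabla'$. The identity $\nabla'(\varphi^*w)=\varphi^*(\nabla w)$ then follows from $\varphi_*\rho'(\varphi^*u)=\rho(u)$ together with the fact that the vertical part of $\rho'(\varphi^*u)$ annihilates $\varphi$-pullbacks of functions. Finally, $\nabla'$ is torsion-free: its torsion is $C^\infty(M')$-bilinear, so it suffices to evaluate it on pairs $(\varphi^*u,\varphi^*v)$, which span $E'\otimes E'$ fibrewise, and using $[\varphi^*u,\varphi^*v]'=\varphi^*[u,v]$, the identity just established, and the $\ms G$-equivariance of the algebraic maps $(\cdot)_N$ and $(\cdot)_E$, one obtains $T_{\nabla'}(\varphi^*u,\varphi^*v)=\varphi^*\!\left(T_\nabla(u,v)\right)=0$.

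\emph{Step 2 (assembling the result).} Every vector bundle map entering the definition of $\mc R$ — the map $\lambda$, hence the subbundle $\on{Ker}\lambda$ and the maps $\lambda_X$, the projections $\mc P_X$, and the identifications of the codomains of $\mc P_S$ and $\mc P_J$ with $S\oplus J$ — is induced by a $\ms K$-equivariant linear map, so it commutes with $\varphi^*$ (equivariance also ensures that the relevant kernels and images have constant rank, hence are subbundles that pull back). Since $\mc R'=\mc P'\circ\nabla'\circ\mc P'\circ\nabla'$ (the choice of torsion-free compatible connection being immaterial), feeding $\varphi^*\epsilon$ through it and repeatedly using $\nabla'(\varphi^*w)=\varphi^*(\nabla w)$ together with these equivariance facts gives $\mc R'(\varphi^*\epsilon)=\varphi^*(\mc R\,\epsilon)$, which is what was needed.

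\emph{Main obstacle.} I expect the delicate part to be Step 1: checking that $\nabla'$ is well-defined independently of the frame and, above all, that it is torsion-free, since this is where one has to handle the fact that $\rho'$ may carry extra vertical components over $M'$ that are invisible over $M$. Once the pullback connection is in hand, Step 2 is essentially bookkeeping with equivariant maps.
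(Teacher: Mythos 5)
Your proposal follows essentially the same route as the paper for the main computation: the pullback connection determined by $(\varphi^*\nabla)_{\varphi^*u}\varphi^*v=\varphi^*(\nabla_{\!u}v)$, the identity $T_{\varphi^*\nabla}=\varphi^*T_\nabla$, naturality of the $\ms K$-equivariant algebraic maps yielding $\mc R'(\varphi^*\epsilon)=\varphi^*(\mc R\,\epsilon)$, and the spanning/tensoriality argument to pass between vanishing on pullback sections and vanishing everywhere. However, there is a genuine gap at your very first line: you \emph{fix} a torsion-free compatible connection on $E$, i.e.\ you assume that the generalised metric on $E$ is torsion-free. This is not among the hypotheses of the theorem, which only supposes that $E$ carries a generalised metric; and $E$ is a general elgebroid (it may be an elgebra, or otherwise far from M-exact), so the Proposition guaranteeing that every generalised metric is torsion-free applies to the M-exact $E'$ but not to $E$. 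Without this, ``the generalised Ricci tensor on $E$'' is not even defined, and the paper explicitly treats establishing its well-definedness as the first, and only non-formal, step of the proof. Note also that your Step 1 as written cannot repair this, since the torsion-freeness of your $\nabla'$ is deduced \emph{from} the assumed torsion-freeness of $\nabla$.

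The paper closes this gap by running your pullback-connection construction in the opposite direction: start from an \emph{arbitrary} compatible connection $\nabla$ on $E$, form $\varphi^*\nabla$ on $E'$, and use $\varphi^*T_\nabla=T_{\varphi^*\nabla}$. Since $E'$ is M-exact, $T_{\varphi^*\nabla}\in\Gamma(\on{Im}\lambda')$; because $\lambda$ is a fixed $\ms K$-equivariant bundle map one has $\on{Im}\lambda'=\varphi^*\on{Im}\lambda$ fibrewise, and surjectivity of $\varphi$ then forces $T_\nabla\in\Gamma(\on{Im}\lambda)$, so $\nabla$ can be corrected to a torsion-free compatible connection on $E$. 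Inserting this argument before your Step 1 makes your proof complete; the rest of it is the paper's argument spelled out in more detail (and your worry about extra vertical components of $\rho'$ is harmless, since the extension of $\nabla'$ off pullback sections is forced by tensoriality in the first slot and the Leibniz rule in the second).
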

    \begin{proof}
        Let $\varphi$ be the map $M' \to M$. First, let us show that the generalised metric on $E$ is torsion-free. Picking any compatible connection $\nabla$ on $E$, we have an induced compatible connection $\varphi^*\nabla$ on $E'$, defined by $(\varphi^*\nabla)_{\varphi^*u}\varphi^*v=\varphi^*(\nabla_{\!u}v)$. We then have $\varphi^*T_\nabla=T_{\varphi^*\nabla}\in \Gamma(\on{Im}\lambda')$, implying $T_\nabla\in\Gamma(\on{Im}\lambda)$. We can thus find another compatible connection on $E$, which is torsion-free. In particular, $\mc R$ on $E$ is well defined.
        The theorem then follows from the fact that $\mc R$ on $E'$ vanishes iff it vanishes on $\varphi^*u$, for all $u\in\Gamma(E)$. But $\mc R(\varphi^*u)=\varphi^*(\mc R u)$, which concludes the proof.
    \end{proof}
    This leads to the following consequences. First, having two different M-exact pullbacks (of the same $E$) on $E'$ and $E''$, with the generalised metrics induced by the one on $E$, the Ricci tensor vanishes on $E'$ iff it vanishes on $E''$. In other words, Poisson--Lie U-duality (in the M-theory case) is compatible with the equations of supergravity.
    
    Furthermore, let $E'$ be an M-exact pullback of an elgebra $E$. Solving $\mc R=0$ on $E$ is ``easy'', since the equation is purely algebraic. However, finding a solution and pulling it back to $E'$ produces an honest solution to the supergravity equations of motion.

{\appendix
\section{Exceptional groups and elgebroids}\label{sec:alg}
\subsection{List of exceptional groups and related data}\label{sec:table}
    We here provide a list of groups and representations relevant for the exceptional geometry, namely the split real forms of the ``exceptional'' groups and the double-cover $\ms K$ of their maximal compact subgroups, representations $E$ and $N$ of the exceptional group, and finally the representations $S$ and $J$ of $\ms K$ (the spinor and gravitino representations).
    \begin{center}
    \begin{tabular}{||c||c|c|c|c||}
        \hline
        $n$ & $3$ & $4$ & $5$ & $6$\\\hline\hline
        $\ms E_{n(n)}$ & $\ms{SL}(3,\R)\times \ms{SL}(2,\R)$ & $\ms{SL}(5,\R)$ & $\ms{Spin}(5,5)$ & $\ms E_{6(6)}$\\\hline
        $\ms K$ & $(\ms{Spin}(3)\times \ms{Spin}(2))/\Z_2$ & $\ms{Spin}(5)$ & $\ms{Spin}(5)\times \ms{Spin}(5)$ & $\ms{USp}(8)$\\\hline\hline
        $E$ & $\mathbf{(3,2)}$ & $\mathbf{10}$ & $\mathbf{16}$ & $\mathbf{27}$\\\hline
        $N$ & $\mathbf{(3',1)}$ & $\mathbf{5'}$ & $\mathbf{10}$ & $\mathbf{27'}$\\\hline\hline
        $S$ & $\mathbf{2_1}\oplus\mathbf{2_{-1}}$ & $\mathbf{4}$ & $\mathbf{(4,1)\oplus (1,4)}$ & $\mathbf{8}$\\\hline
        $J$ & $\mathbf{4_1}\oplus\mathbf{4_{-1}}\oplus\mathbf{2_3}\oplus\mathbf{2_{-3}}$ & $\mathbf{16}$ & $\mathbf{(4,5)\oplus(5,4)}$ & $\mathbf{48}$\\\hline
    \end{tabular}
    \end{center}

    \subsection{Algebra}\label{subsec:alg_gld}
    Let us be more explicit about the Lie algebra $\mf{e}_{n(n)}\oplus \R$. In terms of its $\mf{gl}(T)$-subalgebra, for $T:=\R^n$, it decomposes as
    \[\mf e_{n(n)}\oplus \R=\R\oplus\mf{gl}(T)\oplus \w{3} T^*\oplus \w{6} T^*\oplus \w{3} T\oplus \w{6} T.\]
    First, the $\R$ factor is central. Writing 
    \[a_3+a_6+w_3+w_6\in \w{3} T^*\oplus \w{6} T^*\oplus \w{3} T\oplus \w{6} T,\]
    the remaining nontrivial brackets are
    \begin{align*}
        [a_3,a'_3]=-a_3\wedge a_3'&,\quad [w_3,w_3']=-w_3\wedge w_3',\quad [a_6,w_3]=\iota_{w_3}a_6,\quad [a_3,w_6]=\iota_{w_6}a_3,\\
        [w_3,a_3]&=\hphantom{-}(a_3\star w_3-\tfrac13 \la a_3,w_3\ra \mathds{1})+\tfrac13 \la a_3,w_3\ra\in\mf{gl}(T)\oplus\R\\
        [w_6,a_6]&=-(a_6\star w_6-\tfrac23 \la a_6,w_6\ra \mathds{1})-\tfrac23 \la a_6,w_6\ra\in\mf{gl}(T)\oplus\R,
    \end{align*}
    with
    \[\star\colon \w{k} \!T^*\otimes\w{k} T \to\mf{gl}(T)\cong\on{Hom}(T\otimes T^*,\R),\qquad \alpha\star w=\la\iota_{\bullet}\alpha,\iota_\bullet w\ra,\]
    The algebra $\mf{e}_{n(n)}$ is embedded by setting the $\R$-component equal to the trace of the $\mf{gl}(T)$ component divided by $9-n$.
    
    The representation $E$ is given as follows. First, the action of $\mf{gl}(T)$ is given by the decomposition
    \begin{equation}\label{eq:fund_rep}
        E=T\oplus \w{2}T^*\oplus \w{5}T^*,
    \end{equation}
    while $\R$ acts with weight $1$. Writing $u=X+\sigma_2+\sigma_5\in E$, the remaining part is given by
    \begin{align*}
        w_3\cdot u=\iota_{w_3}(\sigma_2+\sigma_5),\;\; w_6\cdot u=-\iota_{w_6}\sigma_5,\;\; a_3\cdot u=\iota_X a_3+a_3\wedge\sigma_2, \;\; a_6 \cdot u=\iota_X a_6.
    \end{align*}
    
    \subsection{Classification of Lagrangian and co-Lagrangian subspaces}\label{ap:class}
    Recall that the formula for $S^2E\to N$ was given in Example \ref{ex:exc_data}.
    \begin{lem}
         Let $n\in\{3,\dots,6\}$. Consider $u\in E=T\oplus \w{2}T^*\oplus \w{5}T^*$. If $(u\otimes u)_N=0$ then there exists $g\in \ms E_{n(n)}\times\R^+$ s.t.\ $g\cdot u\in T$. If furthermore $u$ has a non-vanishing $T$-part, this can be achieved via an element of the nilpotent subgroup $\w{3}T^*\oplus \w{6}T^*\subset \ms E_{n(n)}\times\R^+$.
    \end{lem}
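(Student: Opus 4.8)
The plan is to split on whether $u$ has a nonzero $T$-component. When it does, an explicit unipotent transformation cleans $u$ up into $T$, which also proves the second, stronger assertion; the general case is then reduced to this one by a preliminary ``lowering'' move. Throughout I would write $u=X+\sigma_2+\sigma_5$ and use the formula for $S^2E\to N$ from Example~\ref{ex:exc_data} to rewrite $(u\otimes u)_N=0$ as the three equations
\[\iota_X\sigma_2=0,\qquad 2\,\iota_X\sigma_5=\sigma_2\wedge\sigma_2,\qquad j\sigma_2\wedge\sigma_5=0.\]
Since the group action preserves this constraint, the transformed vector stays null at every stage, a fact I would use repeatedly.

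Suppose first $X\neq 0$ (this is also the hypothesis of the ``nilpotent subgroup'' claim). Fix $\xi\in T^*$ with $\langle\xi,X\rangle=1$. Using the action of $\w{3}T^*\oplus\w{6}T^*$ on $E$ recalled in Section~\ref{subsec:alg_gld}, I would act by $\exp(a_3)$ with $a_3=-\xi\wedge\sigma_2\in\w{3}T^*$; since $\iota_X\sigma_2=0$ gives $\iota_X a_3=-\sigma_2$, this kills the $\w{2}T^*$-component, and it leaves the $T$-component untouched because $\w{3}T^*$ maps $T$ into $\w{2}T^*$. The resulting null vector has the form $X+\sigma_5'$, so the second constraint equation (now with $\sigma_2=0$) forces $\iota_X\sigma_5'=0$; acting by $\exp(a_6)$ with $a_6=-\xi\wedge\sigma_5'\in\w{6}T^*$ then kills the remaining $\w{5}T^*$-component in the same way. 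Hence $\exp(a_6)\exp(a_3)\cdot u=X\in T$, using only the nilpotent subgroup $\w{3}T^*\oplus\w{6}T^*$.

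Now suppose $X=0$; if $u=0$ there is nothing to prove, so take $u\neq 0$. By the previous paragraph it suffices to move $u$ to a vector with nonzero $T$-component by some element of $\ms E_{n(n)}$. The constraint reduces to $\sigma_2\wedge\sigma_2=0$ and $j\sigma_2\wedge\sigma_5=0$. If $\sigma_2=0$, so $u=\sigma_5\neq 0$, I would pick $w_3\in\w{3}T$ with $\iota_{w_3}\sigma_5\neq 0$ (possible since $\sigma_5\neq 0$); then $\exp(\epsilon w_3)\cdot\sigma_5$ has $\w{2}T^*$-component $\epsilon\,\iota_{w_3}\sigma_5\neq 0$ for small $\epsilon\neq 0$, so we may assume $\sigma_2\neq 0$. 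In that case $\sigma_2\wedge\sigma_2=0$ makes $\sigma_2$ decomposable, $\sigma_2=\alpha\wedge\beta$, and since $n\ge 3$ one can choose $w_3\in\w{3}T$ whose partial contraction $\iota_{w_3}\sigma_2\in T$ is nonzero; then $\exp(\epsilon w_3)\cdot u$ has nonzero $T$-component for small $\epsilon\neq 0$, and we are back in the case already treated. (Only the unipotent parts of $\ms E_{n(n)}$ are used, so the extra $\R^+$ factor plays no role.)

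The routine part is the unipotent cleanup in the second paragraph: the two transformations are forced and each reduces to the single antiderivation identity $\iota_X(\xi\wedge\tau)=\langle\xi,X\rangle\tau-\xi\wedge\iota_X\tau$. The main obstacle is the $X=0$ case: one has to check that the lowering generators in $\w{3}T$ genuinely produce a nonzero $T$- or $\w{2}T^*$-component, which uses the decomposability of $\sigma_2$ (forced by the null constraint) together with $n\ge 3$. This in turn requires a small amount of bookkeeping about which of $T,\w{2}T^*,\w{5}T^*,\w{6}T^*$ are actually present for each $n\in\{3,\dots,6\}$ — for instance there is no $\sigma_5$ when $n=3$, and the third constraint equation is vacuous for $n<6$ — but no genuinely new idea is needed.
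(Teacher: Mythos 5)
Your proof is correct and follows essentially the same route as the paper: for $u$ with nonzero $T$-part, successive $\w{3}T^*$- and $\w{6}T^*$-transformations built from a dual covector $\xi$ kill the form parts, and for $X=0$ a lowering element $w_3\in\w{3}T$ produces a nonzero $T$- (or first $\w{2}T^*$-) component, using decomposability of $\sigma_2$. The only cosmetic difference is in the $X=0$, $\sigma_2\neq 0$ subcase, where the paper picks a special $w_3=o\wedge Y$ so that the exponential acts exactly, while you use a generic $w_3$ with $i_{w_3}\sigma_2\neq 0$ and a small-$\epsilon$ genericity argument -- both are fine.
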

    \begin{proof}
        Suppose first that $u=X+\sigma_2+\sigma_5$, with $X\neq 0$. Let $\xi$ be an element of $T^*$ satisfying $\la \xi,X\ra=1$. Since $(u\otimes u)_N=0$ implies $i_X\sigma_2=0$, taking $a_3=-\sigma_2\wedge \xi$ we have that $e^{a_3}\cdot u=X+\sigma_5'$, with $\sigma_5'\in \w{5}T^*$. Continuing, taking $a_6=\sigma_5'\wedge \xi$, we get $e^{a_6}\cdot (X+\sigma_5')=X$.
        
        We are left to show that if $u=\sigma_2+\sigma_5$, then there exists $g$ s.t.\ $g\cdot u$ will have a nonzero $T$-part. Suppose $\sigma_2\neq 0$. Note that $(u\otimes u)_N=0$ implies $\sigma_2\wedge\sigma_2=0$, i.e.\ $\sigma_2$ is decomposable\footnote{By definition, a 2-form $\sigma_2$ is decomposable if it can be written as a wedge product of two 1-forms, or equivalently if $\sigma_2\wedge\sigma_2=0$.}. Let $o\in \w{2}T$ be a decomposable element s.t.\ $\la \sigma_2,o\ra=1$ and let $Y\in T\neq 0$ be such that $i_Y\sigma_2=0$. Setting $w_3:=o\wedge Y$, we have $e^{w_3}\cdot (\sigma_2+\sigma_5)=Y+(\sigma_2+i_{w_3}\sigma_5)+\sigma_5$, since $w_3\cdot i_{w_3}\sigma_5=0$.
        
        Finally, if $u=\sigma_5\neq 0$, taking any $w_3$ s.t.\ $i_{w_3}\sigma_5\neq 0$ will produce a $2$-form part in $e^{w_3}\cdot\sigma_5$, yielding the previous case.
    \end{proof}
    \begin{prop}\label{prop:classcoiso}
        The space of Lagrangian subspaces of $E$ consists of 2 orbits of the action of $\ms E_{n(n)}\times\R^+$, given by $n$ and $n-1$-dimensional subspaces, respectively. 
    \end{prop}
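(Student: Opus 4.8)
The plan is to argue by induction on $n$, using the Lemma above together with a ``null reduction'' at a null vector of $T$. As a first step I would record that $T\subset E$ is itself a Lagrangian: polarising the formula of Example \ref{ex:exc_data}, one gets (up to an overall nonzero constant) $(X\otimes v)_N=i_X\tau_2+i_X\tau_5$ for $X\in T$ and $v=Y+\tau_2+\tau_5\in E$, and since the two terms lie in different $\mf{gl}(T)$-subrepresentations of $N$, the vanishing of $(X\otimes v)_N$ for all $X\in T$ forces $\tau_2=\tau_5=0$; thus $T^\perp=T$, so $T$ is isotropic and maximal, and in particular every Lagrangian is nonzero. Given any Lagrangian $V$, pick $0\neq u\in V$; by the Lemma some $g\in\ms E_{n(n)}\times\R^+$ sends $u$ into $T$, so after replacing $V$ by $g\cdot V$ we may assume $e_1\in V$ for some $0\neq e_1\in T$.

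The key step is the null reduction at $e_1$. Setting $W^*:=e_1^\circ\subset T^*$ and choosing a complement $W\cong T/\R e_1$ of $\R e_1$ in $T$, the formula above gives $e_1^\perp=T\oplus\w{2}W^*\oplus\w{5}W^*$, and $e_1$ lies in the radical of the restriction of the $N$-valued form to $e_1^\perp$. Restricting that form and reducing mod $\R e_1$, its value on $v=Y+\tau_2+\tau_5\in e_1^\perp$ (with $\tau_2\in\w{2}W^*$, $\tau_5\in\w{5}W^*$, and $Z\in W$ the image of $Y$) works out to $2\,i_Z\tau_2+(2\,i_Z\tau_5-\tau_2\wedge\tau_2)+2\,j\tau_2\wedge\tau_5\in W^*\oplus\w{4}W^*\oplus(W^*\otimes\w{6}W^*)$ — exactly the formula of Example \ref{ex:exc_data} with $W$ in place of $T$. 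Hence $\bar E:=e_1^\perp/\R e_1$, so equipped, is the exceptional group data set of rank $n-1$, and its symmetry group $\ms E_{(n-1)(n-1)}\times\R^+$ is realised inside the stabiliser of $e_1$ in $\ms E_{n(n)}\times\R^+$: it is generated by $\mf{gl}(W)$ together with the nilpotent pieces $\w{3}W^*,\w{6}W^*,\w{3}W,\w{6}W$ of $\mf e_{n(n)}\oplus\R$ (see Appendix \ref{sec:alg}), each of which annihilates $e_1$. Since $V\subset e_1^\perp$, the assignment $V\mapsto V/\R e_1$ is a dimension-dropping bijection from Lagrangians of $E$ through $e_1$ to Lagrangians of $\bar E$, intertwining the stabiliser of $e_1$ with a subgroup of $\ms{GL}(\bar E)$ containing $\ms E_{(n-1)(n-1)}\times\R^+$.

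Then I would close the induction, with base case $n=3$, where $E\cong\R^3\otimes\R^2$ and a short direct computation (or the classical description of linear subspaces of the Segre variety $\mathbb{P}^2\times\mathbb{P}^1$) shows the Lagrangians are exactly $\R^3\otimes\ell$ and $\ell'\otimes\R^2$ for lines $\ell\subset\R^2$, $\ell'\subset\R^3$, i.e.\ two orbits of dimensions $3$ and $2$. Granting the statement for rank $n-1$, the Lagrangians of $\bar E$ form exactly two orbits, of dimensions $n-1$ and $n-2$; transporting this through the bijection and using that every Lagrangian of $E$ can be moved to meet $T$ nontrivially, we obtain at most two orbits of Lagrangians of $E$, of dimensions $n$ and $n-1$, the $n$-dimensional one being the orbit of $T$ and each dimension forming a single orbit (by pushing the corresponding downstairs statement back up). These two orbits are genuinely distinct, because $\ms E_{n(n)}\times\R^+$ preserves dimension and we exhibit a Lagrangian of dimension $n$ (namely $T$) and one of dimension $n-1$ (the preimage in $e_1^\perp$ of an $(n-2)$-dimensional Lagrangian of $\bar E$). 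This gives exactly two orbits, of dimensions $n$ and $n-1$, as claimed.

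The main effort, I expect, will go into the null-reduction identification of $\bar E$ with the rank-$(n-1)$ exceptional group data set and the verification that its symmetry group sits inside the stabiliser of $e_1$ — this is not deep but requires care with the explicit formulas and the $\R^+$-grading of the Appendix — together with the base case $n=3$; everything else (that $T$ is Lagrangian, and the bookkeeping separating the two orbits and fixing their dimensions) is routine. An alternative would be a case-by-case treatment of $n\in\{3,\dots,6\}$, using the $\w{2}\R^{n+1}$-realisation of $E$ for $n=4$ and the Jordan-algebra description for $n=6$, but the inductive argument is more uniform.
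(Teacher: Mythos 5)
Your proposal is correct and follows essentially the same route as the paper: both use the preceding Lemma to move a null element of the Lagrangian into $T$ and then induct by passing to the rank-$(n-1)$ exceptional data set realised inside the stabiliser of that element. The only cosmetic differences are that you phrase the reduction invariantly as the quotient $e_1^\perp/\R e_1$ and stop at a base case $n=3$, whereas the paper adjusts the remaining generators to lie in $U\oplus\w{2}U^*\oplus\w{5}U^*$ for a complement $U$ and continues down to an extended $n=2$ case.
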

    \begin{proof}
        We shall show that, up to an $\ms E_{n(n)}\times \R^+$-transformation we obtain but two possibilities. We proceed inductively. 
        
        Suppose that such a Lagrangian subspace $W$ is spanned by vectors $\omega_i$, $i\in\{1,\dots\}$. Since $(\omega_1\otimes\omega_1)_N=0$, we can find $g$ such that $g\cdot \omega_1\in T$ -- we now replace $W$ by $g\cdot W$, i.e.\ all $\omega_i$ by $g\cdot \omega_i$. 
        
        Let now $U\subset T$ be an $n-1$-dimensional subspace of $T$ which is complementary to $\la \omega_1\ra\in T$, where $\la\cdot\ra$ denotes the linear span. The remaining $\omega_i$'s satisfy $(\omega_1\otimes\omega_i)_N=0$, implying they belong to the subspace
        $U\oplus \w{2}U^*\oplus \w{5}U^*\oplus \la \omega_1\ra$. Replacing $\omega_i$, $i\in\{2,\dots\}$ by $\omega_i+\lambda_i \omega_1$, for some suitable $\lambda_i$, we get that
        \[\omega_i\in U\oplus \w{2}U^*\oplus \w{5}U^*,\quad i\in\{2,\dots\}.\]
        Similarly, the Lie subalgebra of $\mf e_{n(n)}\oplus \R$ preserving the subspace spanned by $\omega_1$ contains the algebra $\mf e_{n-1(n-1)}\oplus \R\cong \R\oplus\mf{gl}(U)\oplus \w{3} U^*\oplus \w{6} U^*\oplus \w{3} U\oplus \w{6} U$. Thus the problem for a given $n$ reduces to the same problem for $n-1$.
        
        To finish, we only need to look at the case $n=2$,\footnote{Although above we only considered the case of $n\ge 3$, both the $\mf{gl}(n,\R)$ decompositions of $E$, $N$, as well as the form of the map $E\otimes E\to N$, extend naturally to the $n=2$ case.} where we have $E\cong T\oplus \w{2}T^*$, with $\dim T=2$, $N\cong T^*$, and $((X+\sigma_2)\otimes (X+\sigma_2))_N=2i_X\sigma_2$. If $\omega_1$ has a non-zero 2-form part, the condition $(\omega_1\otimes\omega_1)_N=0$ requires it to have a vanishing vector part. This gives the 1-dimensional Lagrangian subspace $\w{2}T^*$. The other possibility is to have $\omega_1\in T$, which can be further enlarged by adding a second generator of $T$, yielding the 2-dimensional Lagrangian subspace $T$.
    \end{proof}
    \begin{prop}\label{prop:pairs}
        All pairs $(V,W)$, given by a co-Lagrangian subspace $V\subset E$ of codimension $n$ and a complementary Lagrangian subspace $W$, are related by the action of the group $\ms E_{n(n)}\times\R^+$.
    \end{prop}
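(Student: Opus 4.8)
The plan is to normalise the pair $(V,W)$ to a single standard model $(V_0,W_0)$ by a sequence of $\ms E_{n(n)}\times\R^+$-transformations, and then conclude that all such pairs lie in one orbit.

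First I would normalise $V$. Since $V$ has codimension $n$, its annihilator $V^\circ\subset E^*$ is $n$-dimensional, and coisotropy of $V$ says precisely that $V^\circ$ is isotropic in $E^*$ with respect to $S^2E^*\to N^*$ (which has the same form as the map in Example \ref{ex:exc_data}). Applying the lemma of Appendix \ref{ap:class} to $E^*$, any isotropic vector of $E^*$ can be carried into $T^*\subset E^*$, and running the inductive argument in the proof of Proposition \ref{prop:classcoiso} on $E^*$ then shows that every $n$-dimensional isotropic subspace of $E^*$ is $\ms E_{n(n)}\times\R^+$-equivalent to $T^*$ (a dimension count rules out the $(n-1)$-dimensional orbit). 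Hence, after a group transformation, I may assume $V=V_0:=\w{2}T^*\oplus\w{5}T^*=(T^*)^\circ$, which is indeed co-Lagrangian of codimension $n$ by Lemma \ref{lem:colagexc} and Proposition \ref{prop:classcoiso}.

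Next, since $E=T\oplus V_0$ and $\dim W=n$, the complement $W$ projects isomorphically onto $T$ and so is the graph of a linear map $\phi=\phi_2+\phi_5\colon T\to\w{2}T^*\oplus\w{5}T^*$, i.e.\ $W=\{X+\phi_2(X)+\phi_5(X):X\in T\}$. Polarising the formula for $S^2E\to N$ in Example \ref{ex:exc_data}, isotropy of $W$ becomes three equations; the first reads $i_X\phi_2(X')+i_{X'}\phi_2(X)=0$, and combined with the antisymmetry of $\phi_2(X)$ this forces $\phi_2(X)=i_Xa_3$ for a fixed $a_3\in\w{3}T^*$. The key point is that the stabiliser of $V_0$ inside $\ms E_{n(n)}\times\R^+$ contains the abelian unipotent subgroup $\exp(\w{3}T^*\oplus\w{6}T^*)$: from the explicit action in Section \ref{subsec:alg_gld} one has $a_3\cdot V_0\subset\w{5}T^*\subset V_0$ and $a_6\cdot V_0=0$. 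Acting by $e^{-a_3}$ (which preserves $V_0$) replaces $W$ by a graph with $\phi_2=0$; then the remaining equations reduce to $i_X\phi_5(X')+i_{X'}\phi_5(X)=0$, forcing $\phi_5(X)=i_Xa_6$ for some $a_6\in\w{6}T^*$, and acting by $e^{-a_6}$ kills $\phi_5$. Thus $W$ is sent to $T$, so every pair $(V,W)$ is equivalent to $(V_0,T)$, which is the claim.

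The hard part will be the bookkeeping in the last step: checking that $e^{a_3}$ and $e^{a_6}$ genuinely preserve $V_0$, keeping track of the quadratic term $\tfrac12 a_3\wedge i_Xa_3$ appearing in $e^{a_3}\cdot X$, and verifying that the second and third components of the isotropy condition behave as claimed once $\phi_2$ has been removed — this last point is forced because the group action preserves isotropy and fixes $V_0$, but it deserves to be made explicit.
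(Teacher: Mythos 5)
Your argument is correct, and while its first half coincides with the paper's, its second half takes a genuinely different route. The normalisation of $V$ is the same in both: coisotropy of $V$ is isotropy of the $n$-dimensional $V^\circ\subset E^*$, the map $S^2E^*\to N^*$ has the same shape as $S^2E\to N$, and the classification of Lagrangians (Proposition \ref{prop:classcoiso}, applied to $E^*$, with the dimension count excluding the $(n-1)$-dimensional orbit) puts $V$ into the standard position $V_0=\w{2}T^*\oplus\w{5}T^*$. For the complement, however, the paper proceeds by induction on $n$: it picks a basis $\omega_i$ of $W$, uses the Lemma of Appendix \ref{ap:class} to move $\omega_1$ into $T$ by a $\w{3}T^*\oplus\w{6}T^*$-transformation (which fixes $V_0$), passes to the smaller problem on $U\oplus\w{2}U^*\oplus\w{5}U^*$, and terminates at the explicit $n=2$ case. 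You instead describe $W$ once and for all as the graph of $\phi_2+\phi_5\colon T\to\w{2}T^*\oplus\w{5}T^*$, observe that isotropy forces $\phi_2=i_\bullet a_3$ and (after killing $\phi_2$) $\phi_5=i_\bullet a_6$, and then use $e^{-a_3}$, $e^{-a_6}$ from the stabiliser of $V_0$ to send $W$ to $T$. This is a valid alternative: it avoids the induction and the base case, and it has the added benefit of exhibiting explicitly that the unipotent subgroup $\exp(\w{3}T^*\oplus\w{6}T^*)$ alone acts transitively on the Lagrangian complements of $V_0$ --- a fact the paper invokes separately at the end of the proof of Theorem \ref{thm:class} when comparing splittings at a point. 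The price is the bookkeeping you already flag (the quadratic term $\tfrac12\,a_3\wedge i_Xa_3$ in $e^{-a_3}\cdot X$, and rechecking the remaining isotropy conditions after the shift), all of which goes through. One small correction: $\w{3}T^*\oplus\w{6}T^*$ is nilpotent but not abelian for $n=6$ (since $[a_3,a_3']=-a_3\wedge a_3'$ can be nonzero); this does not affect your argument, as you apply $e^{-a_3}$ and $e^{-a_6}$ consecutively and each separately preserves $V_0$.
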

    \begin{proof}
        Since the formulas for $S^2 E^*\to N^*$, in terms of the $\mf{gl}(T)$-decomposition, have (up to an overall constant) the same form as the ones for $S^2E\to N$, we get that up to the $\ms E_{n(n)}\times\R^+$-action there is just one codimension $n$ co-Lagrangian subspace $V\subset E$. Let us therefore identify $V$ with $\w{2}T^*\oplus\w{5}T^*\subset E$. Note that this is preserved by the subgroup of $\ms E_{n(n)}\times\R^+$ corresponding to $\R\oplus\mf{gl}(T)\oplus \w{3}T^*\oplus \w{6}T^*\subset \mf{e}_{n(n)}\oplus\R$.
        
        Suppose again that $W$ is spanned by $\omega_i$. Since $\omega_i\notin V$, the Lemma implies that we can use a $\w{3}T^*\oplus\w{6}T^*$-transformation to map $\omega_1$ into an element of $T$. Redefining the basis of $W$, we can assume that the remaining $\omega_i$'s lie in $U\oplus \w{2}U^*\oplus \w{5}U^*$, where $U$ is a complement to $\la\omega_1\ra\subset T$, and the situation reduces to the same situation in a smaller dimension. Ultimately, we reach $n=2$, in which the only possible 2-dimensional Lagrangian complementary to $\w{2}T^*$ is $W=T$.
    \end{proof}
    
    \subsection{Rewriting the bracket}
    For the purpose of the proof, it will be useful to recast the bracket on the exceptional tangent bundle in a more convenient language. Following \cite{CSCW}, this is given as follows.
    
    First, pick local coordinates on $M$. This (locally) induces a trivialisation $E\cong M\times (T\oplus\w{2}T^*\oplus \w{5}T^*)$, with $T:=\R^n$, and thus
    $\Gamma(E)\cong C^\infty(M)\otimes (T\oplus \w{2} T^*\oplus \w{5} T^*)$ and similarly for $N$.
    We then have
    \begin{equation}\label{eq:bracket_coords}
        [u,v]=\rho(u)v-\pi(\hat d u)v.
    \end{equation}
    One can check that this is independent of the choice of coordinates.

    \subsection{Pre-elgebroids}
    \begin{defn}
        A \emph{pre-elgebroid} is a structure obtained by replacing, in the definition of an elgebroid, the condition \eqref{eq:jacobi} by a weaker condition \eqref{eq:morphism} from Lemma \ref{lem:basic}. A pre-elgebroid is \emph{M-exact} if the sequence $T^*M\otimes N\to E\to TM\to 0$ is exact and $\dim M=n$.
    \end{defn}
    Note that in particular the properties \eqref{eq:coiso} and \eqref{eq:first_slot} from Lemma \ref{lem:basic} still hold for a pre-elgebroid.
    \begin{lem}\label{lem:large}
         An M-exact pre-elgebroid is locally of the form from Example \ref{ex:exc}, but with the bracket
         \begin{align}\label{eq:long}
            [X+\sigma_2+\sigma_5&,X'+\sigma_2'+\sigma_5']=\mc L_X X'+(\mc L_X\sigma_2'-i_{X'}d\sigma_2+i_{X'}i_X F_4+(i_XF_1)\sigma_2'-i_{X'}(F_1\wedge\sigma_2))\nonumber\\
            &+(\mc L_X \sigma_5'-i_{X'}d\sigma_5-\sigma_2'\wedge d\sigma_2+(i_XF_4)\wedge\sigma_2'-i_{X'}(F_4\wedge \sigma_2)+2(i_XF_1)\sigma_5'\\
            &-F_1\wedge\sigma_2\wedge\sigma_2'-2i_{X'}(F_1\wedge \sigma_5)),\nonumber
        \end{align}
        for some $F_1\in\Omega^1(M)$, $F_4\in\Omega^4(M)$.
    \end{lem}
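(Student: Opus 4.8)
The plan is to start from the rewritten bracket formula \eqref{eq:bracket_coords}, which holds for the exceptional tangent bundle of Example \ref{ex:exc}, and to show that an arbitrary M-exact pre-elgebroid is, locally, a deformation of this form by exactly the terms appearing in \eqref{eq:long}. First I would use M-exactness: since the sequence $T^*M\otimes N\to E\to TM\to 0$ is exact and $\dim M=n$, the anchor $\rho$ is surjective with $\on{Ker}\rho\cong (\w{2}T^*M\oplus\w{5}T^*M)$ of the expected rank. Choosing local coordinates on $M$ and applying property \eqref{eq:coiso} (that $\on{Ker}\rho$ is coisotropic, which, by Proposition \ref{prop:classcoiso} and the fact that its codimension is $n$, forces it to be co-Lagrangian of the "M-theory type"), I can pick a local $\ms G$-frame identifying $E\cong M\times(T\oplus\w{2}T^*\oplus\w{5}T^*)$ with $\rho$ the projection onto $T$ and $\on{Ker}\rho=\w{2}T^*\oplus\w{5}T^*$. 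In such a frame the bracket is determined by its structure coefficients, and property \eqref{eq:first_slot}, $[fu,v]=f[u,v]-\pi(\dh f\otimes u)v$, controls the $C^\infty(M)$-failure of linearity in the first slot exactly as in \eqref{eq:bracket_coords}.

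The key step is then to write the general bracket as $[u,v]=\rho(u)v-\pi(\dh u)v+\mc T(u,v)$, where the first two terms reproduce the "flat" exceptional bracket and $\mc T$ is the discrepancy. Property \eqref{eq:first_slot} (which, as noted, survives for pre-elgebroids) together with \eqref{eq:anchor} forces $\mc T$ to be $C^\infty(M)$-bilinear, so $\mc T\in\Gamma(E^*\otimes E^*\otimes E)$; moreover \eqref{eq:sym} constrains its symmetric part to be $\mc D$-exact, and since $\mc D$ is fixed by the data this symmetric part is already accounted for, so only the "antisymmetric" piece of $\mc T$ is free. Next I would decompose $\mc T$ into $\mf{gl}(T)$-irreducible components. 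The condition \eqref{eq:morphism}, $\rho([u,v])=[\rho(u),\rho(v)]$, kills all components of $\mc T$ that would contribute to the $T$-part of the bracket, and the requirement that $[u,\cdot]$ preserve the $\ms G$-structure (equivalently, acts in the adjoint of $\mf e_{n(n)}\oplus\R$ on the $\on{Ker}\rho$ directions) restricts $\mc T$ to lie in the image of the adjoint action. Combined with the admissibility / coisotropy constraints, the surviving components of $\mc T$ are precisely parametrised by a $1$-form and a $4$-form on $M$ — these are the $F_1$ and $F_4$ of the statement, entering through the $\iota_X F_1$, $\iota_{X'}(F_1\wedge\sigma_2)$, $\iota_{X'}\iota_X F_4$, $\iota_X F_4\wedge\sigma_2'$, etc.\ terms. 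Matching coefficients term by term against \eqref{eq:long} then finishes the identification.

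The main obstacle I expect is the representation-theoretic bookkeeping in this last decomposition: one must show that no other $\mf{gl}(T)$-component of $E^*\otimes E^*\otimes E$ is allowed once all the constraints \eqref{eq:morphism}, \eqref{eq:first_slot}, \eqref{eq:coiso}, the $\ms G$-structure preservation, and the symmetry constraint \eqref{eq:sym} are imposed, and that the ones that do survive assemble exactly into a closed-form-free $(F_1,F_4)$ with the stated coefficients (in particular the relative factors of $2$ and the $F_1\wedge\sigma_2\wedge\sigma_2'$ cross-term). This is a finite but somewhat intricate check that has to be done uniformly in $n\in\{3,\dots,6\}$; I would organise it by weight under the $\R^+$-grading and by form-degree, which makes the number of candidate terms manageable. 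The closedness conditions $dF_1=0$, $dF_4+F_1\wedge F_4=0$ are not needed for Lemma \ref{lem:large} itself — they only arise when one additionally imposes the Jacobi identity \eqref{eq:jacobi} to pass from a pre-elgebroid to a genuine elgebroid, which is the content of Theorem \ref{thm:class} and explains why the remark following that theorem lists exactly these two equations.
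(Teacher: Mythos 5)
Your route is essentially the paper's: bring $E$ locally into the standard form, write $[u,v]=\rho(u)v-\pi(\dh u)v+\mc T(u,v)$ with the discrepancy tensorial and adjoint-valued (by \eqref{eq:anchor}, part \eqref{eq:first_slot} of Lemma \ref{lem:basic}, and $\ms G$-structure preservation), restrict its target using \eqref{eq:morphism} on constant sections, and then argue that only an $F_1\in\Omega^1(M)$ and an $F_4\in\Omega^4(M)$ survive. One step you gloss over: ``pick a local $\ms G$-frame with $\rho$ the projection and $\on{Ker}\rho=\w{2}T^*\oplus\w{5}T^*$'' is not automatic. The paper first chooses a local isotropic splitting $TM\to E$ (automatically Lagrangian since $\dim M=n$) and then invokes Proposition \ref{prop:pairs} to put the pair $(\on{Ker}\rho,\ \text{splitting})$, together with the maps between $S^2E$ and $N$, into standard position; also, co-Lagrangianity of $\on{Ker}\rho$ follows most directly from exactness at $E$ via Lemma \ref{lem:colagexc}, whereas your ``coisotropic of codimension $n$ hence co-Lagrangian'' needs a further argument through the classification. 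These are repairable, minor omissions.

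The genuine gap is your treatment of the symmetric part. You assert that \eqref{eq:sym} makes the symmetric part of $\mc T$ ``$\mc D$-exact'' and, ``since $\mc D$ is fixed by the data'', already accounted for, leaving only an antisymmetric piece to analyse. For a pre-elgebroid $\mc D$ is \emph{not} fixed: axiom \eqref{eq:diff} only forces $\mc Dn-(\dh n)_E$ to be tensorial, so $\mc D$ carries its own unknown $B\colon N\to E$, and the symmetric axiom is not a condition to be set aside but the very engine of the proof. In the paper one evaluates \eqref{eq:sym} on constant sections: $(X\otimes Y)_N=0$ forces the symmetrisation of $A(X)\cdot Y$ to vanish, which is precisely what reduces $A|_T$ to $A(X)=i_X(F_1+F_4)$ (otherwise the $\w{3}T^*$-valued part of $A|_T$ could be any element of $T^*\otimes\w{3}T^*$, not one lying in $\w{4}T^*$); the remaining instances of \eqref{eq:sym}, such as $B((X\otimes\sigma_2)_N)=A(X)\cdot\sigma_2+A(\sigma_2)\cdot X$ and $-B(\sigma_2\wedge\sigma_2')=A(\sigma_2)\cdot\sigma_2'+A(\sigma_2')\cdot\sigma_2$, couple $A$ and $B$ and pin down $A(\sigma_2)=-(F_1+F_4)\wedge\sigma_2$, $A(\sigma_5)=-2F_1\wedge\sigma_5$ (plus vanishing scaling parts) and $B(\sigma_4)=2F_1\wedge\sigma_4$. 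If the symmetric part were really discarded, the representation-theoretic bookkeeping you defer could not cut the adjoint-valued $A$ down to just $(F_1,F_4)$. Since you do list \eqref{eq:sym} among the constraints at the end, the argument is repairable, but as written this is the step where it would fail. Your closing observation that $dF_1=0$ and $dF_4+F_1\wedge F_4=0$ are not needed for the Lemma and only enter via the Jacobi identity in Theorem \ref{thm:class} is correct.
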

    \begin{proof}
        By Lemma \ref{lem:colagexc}, M-exactness implies that $\rho$ is surjective and $\on{Ker}\rho$ is co-Lagrangian of codimension $n$. Choose a local isotropic splitting $\iota\colon TM\to E$ of the exact sequence. Since the base $M$ is $n$-dimensional, $\iota(TM)$ is automatically Lagrangian, and we have a decomposition $E=\on{Ker}\rho\oplus \iota(TM)$ into a codimension $n$ co-Lagrangian subbundle and a Lagrangian one. Using Proposition \ref{prop:pairs}, we can then make an identification 
        \begin{equation}\label{eq:identif}
            E=\iota(TM)\oplus\on{Ker}\rho\cong TM\oplus(\w{2} T^*M\oplus \w{5} T^*M),
        \end{equation}
        and similarly $N\cong T^*M\oplus \w{4}T^*M$, with the maps between $S^2E$ and $N$ (and the action of $\ms E_{n(n)}\times\R^+$) given as in the case of the exceptional tangent bundle. This identification is not unique, due to the presence of the $\R^+$-factor in the group $\R^+\times \ms{GL}(T)$ preserving the decomposition \eqref{eq:fund_rep}. We can however always make the choice locally, with two such choices differing by a positive function $e^\psi$ for $\psi\in C^\infty(M)$.
        
        It remains to check that the bracket has the desired form. Picking local coordinates on $M$, we get a trivialisation of $E$ just as in the previous Subsection. From \eqref{eq:anchor}, part \eqref{eq:first_slot} of Lemma \ref{lem:basic}, and the fact that $[u,\cdot]$ preserves the $\ms E_{n(n)}\times\R^+$-structure, we get
        \[[u,v]=\rho(u)v-\pi(\hat d u)v+A(u)\cdot v,\]
        where $A$ is (at every point of $M$) a map 
        \[T\oplus \w{2}T^*\oplus \w{5}T^*\to\R\oplus\w{6}T\oplus\w{3} T\oplus \mf{gl}(T)\oplus\w{3}T^*\oplus \w{6}T^*.\]
        Similarly, we have $\mc Dn=(\hat dn)_E+B(n)$, with 
        \[B\colon T^*\oplus \w{4} T^* \oplus (T^*\otimes\w{6}T^*)\to T\oplus \w{2}T^*\oplus \w{5}T^*.\]
        
        Taking two constant sections $u,v$, we have $[u,v]=A(u)\cdot v$ and also $\rho([u,v])=0$, implying \[A(u)(T\oplus\w{2}T^*\oplus\w{5}T^*)\subset \w{2}T^*\oplus\w{5}T^*.\] Thus $A$ is actually targeted only in $\R'\oplus\w{3}T^*\oplus \w{6}T^*$, where 
        \[\R'\subset \mf e_{n(n)}\oplus\R,\qquad \R'=\{(\tfrac{c}{3},-\tfrac{c}{3}\mathds{1})\in \R\oplus \mf{gl}(T)\mid c\in\R\}.\] In particular, $\R'$ acts on $T$, $\w{2}T^*$, and $\w{5}T^*$ with weights $0$, $1$, and $2$, respectively. Let us use the notation $A_0$, $A_3$, $A_6$ for the parts of $A$ valued in $\R'$, $\w{3}T^*$, $\w{6}T^*$.
        
        Since $(T\otimes T)_N=0$, using \eqref{eq:sym} we have that for $X,Y\in T$ 
        \[0=A(X)\cdot Y+A(Y)\cdot X=i_Y (A_3(X)+A_6(X))+i_X (A_3(Y)+A_6(Y)).\]
        This implies that $A|_T$, seen as an element of $(T^*\otimes\w{0}T^*)\oplus (T^*\otimes \w{3}T^*)\oplus (T^*\otimes \w{6}T^*)$ is skew-symmetric in each of its terms, implying there exist $F_1\in T^*$ and $F_4\in \w{4} T^*$ s.t.\ $A(X)=i_X (F_1+F_4)$. Similarly, denoting 2- and 5-forms by the corresponding subscript, we have
        \[B(i_X\sigma_2)=A(X)\cdot \sigma_2+A(\sigma_2)\cdot X=(i_X (F_1+F_4))\wedge \sigma_2+i_X A_3(\sigma_2)+i_X A_6(\sigma_2).\]
        
        Thus $i_X \sigma_2=0$ implies $i_X [(F_1+F_4)\wedge \sigma_2+A_3(\sigma_2)+A_6(\sigma_2)]=0$. Taking $\sigma_2$ decomposable, there are $n-2$ independent vectors in $T$ which give zero upon contraction with $F_1\wedge\sigma_2+A_3(\sigma_2)\in\w{3}T^*$ and $F_4\wedge\sigma_2+A_6(\sigma_2)\in\w{6}T^*$. This implies 
        \[A_3(\sigma_2)=-F_1\wedge\sigma_2,\qquad A_6(\sigma_2)=-F_4\wedge\sigma_2.\]
        Furthermore, for $\sigma_2$ decomposable,
        $0=-B(\sigma_2\wedge\sigma_2)=2A(\sigma_2)\cdot \sigma_2=2A_0(\sigma_2)\sigma_2$, implying $A_0(\sigma_2)=0$.
        Since decomposable 2-forms span $\w{2} T^*$, we get
        \[A(\sigma_2)=-F_1\wedge \sigma_2-F_4\wedge \sigma_2 \quad \forall \sigma_2\in\w{2}T^*.\]
        
        Next, from
        $-B(\sigma_2\wedge\sigma_2')=A(\sigma_2)\cdot\sigma_2'+A(\sigma_2')\cdot\sigma_2=-2F_1\wedge\sigma_2\wedge\sigma_2'$
        we deduce $B(\sigma_4)=2F_1\wedge\sigma_4$. This in turn gives
        \[2F_1\wedge i_X\sigma_5=B(i_X \sigma_5)=A(X)\cdot\sigma_5+A(\sigma_5)\cdot X=2(i_XF_1)\sigma_5+A(\sigma_5)\cdot X,\]
        implying $A_3(\sigma_5)=0$ and $A_6(\sigma_5)=-2F_1\wedge\sigma_5$. For any $\sigma_5$ there exists $\sigma_2\neq 0$ such that $j\sigma_2\wedge\sigma_5=0$, implying
        \[0=B(j\sigma_2\wedge \sigma_5)=A(\sigma_2)\cdot\sigma_5+A(\sigma_5)\cdot \sigma_2=A_0(\sigma_5)\sigma_2,\]
        and thus $A_0(\sigma_5)=0$. Putting things together and using \eqref{eq:bracket}, we obtain bracket of the desired form.
    \end{proof}
    \begin{lem}\label{lem:jacobiator}
         For any $M$-exact pre-elgebroid, the \emph{Jacobiator}
         \[J(u,v,w):=[u,[v,w]]-[[u,v],w]-[v,[u,w]]\]
         is $C^\infty(M)$-linear in all the slots.
    \end{lem}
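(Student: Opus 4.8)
The plan is to verify $C^\infty(M)$-linearity one slot at a time, treating the second and third arguments together (where M-exactness is not needed) and the first argument separately (where it is).

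For the third slot I would expand $J(u,v,fw)$ using only the anchor rule \eqref{eq:anchor} and the morphism property \eqref{eq:morphism}; the derivation corrections produced are $(\rho(u)f)[v,w]$, $(\rho(v)f)[u,w]$ and $\bigl(\rho(u)\rho(v)f-\rho(v)\rho(u)f-[\rho(u),\rho(v)]f\bigr)w$, each of which cancels, so $J(u,v,fw)=fJ(u,v,w)$. For the second slot I would expand $J(u,fv,w)$ using \eqref{eq:anchor}, the property \eqref{eq:first_slot} (which, as noted, holds for pre-elgebroids), and \eqref{eq:fctionvsbkt} (whose proof uses only \eqref{eq:morphism}), together with the facts that $[u,\cdot\,]$ satisfies the Leibniz rule on $E^*\otimes E$ and commutes with the $\ms G$-equivariant map $\pi$; the correction terms — of the schematic form $(\rho(u)f)[v,w]$, $\pi(\hat d(\rho(u)f)\otimes v)w$, $\pi(\hat df\otimes[u,v])w$ and $\pi(\hat df\otimes v)[u,w]$ — again cancel in pairs, giving $J(u,fv,w)=fJ(u,v,w)$.

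For the first slot I would begin from the identity $J(u,v,w)+J(v,u,w)=-[\mc D(u\otimes v)_N,w]$, which is immediate from \eqref{eq:sym}, and combine it with the linearity in the second slot just obtained. This reduces $C^\infty(M)$-linearity of $J$ in the first slot to showing that $(n,w)\mapsto[\mc Dn,w]$ is $C^\infty(M)$-bilinear; bilinearity in $w$ is automatic since $\rho\circ\mc D=0$ (part of \eqref{eq:coiso}), so the point is the identity $[\mc D(fn),w]=f[\mc Dn,w]$, which by \eqref{eq:diff} and \eqref{eq:first_slot} is equivalent to $[(\hat df\otimes n)_E,w]=\pi(\hat df\otimes\mc Dn)w$. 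This is the step where M-exactness genuinely enters: for a general pre-elgebroid it may fail, while for an honest elgebroid it is automatic because $\mc D$ is then left-central, i.e.\ \eqref{eq:lcentre} holds. To establish it I would invoke Lemma \ref{lem:large}: locally the bracket has the explicit form \eqref{eq:long}, and $\mc D$ differs from $(\hat d\,\cdot\,)_E$ by the $C^\infty(M)$-linear term built from $F_1$ and $F_4$ recorded in the proof of that lemma; inserting these expressions and the explicit formulas for the maps $E^*\otimes N\to E$ and $\pi$ of the exceptional group data set (Appendix \ref{sec:alg}), the identity becomes a finite check. Moreover, since $F_1=F_4=0$ reproduces the exceptional tangent bundle, where the identity holds by \eqref{eq:lcentre}, only the $F_1,F_4$-dependent terms actually need to be matched.

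The main obstacle is this last check: the slot-two and slot-three manipulations are formal cancellations valid for every pre-elgebroid, but the first-slot identity $[(\hat df\otimes n)_E,w]=\pi(\hat df\otimes\mc Dn)w$ requires the precise structure of $E$, $N$ and $\pi$ for the exceptional group data set, together with $\dim M=n$ via Lemma \ref{lem:large}; it is the only part of the argument that is not a routine cancellation.
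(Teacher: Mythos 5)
Your argument is correct, and it takes a more structural route than the paper, which simply verifies $C^\infty(M)$-linearity of $J$ by a direct computation with the explicit twisted bracket \eqref{eq:long} from Lemma \ref{lem:large}. Your treatment of the second and third slots is a purely formal cancellation using \eqref{eq:anchor}, \eqref{eq:morphism}, \eqref{eq:first_slot}, \eqref{eq:fctionvsbkt} and $\ms G$-equivariance of $\pi$ — all of which, as you note, survive the passage to pre-elgebroids — so it is valid for pre-$\ms G$-algebroids over any admissible group data set, not just the exceptional one; and your symmetrisation identity $J(u,v,w)+J(v,u,w)=-[\mc D(u\otimes v)_N,w]$ correctly isolates the one place where M-exactness enters, namely the tensoriality $[\mc D(fn),w]=f[\mc Dn,w]$, equivalently $[(\hat df\otimes n)_E,w]=\pi(\hat df\otimes\mc Dn)w$, which in an honest elgebroid is automatic from \eqref{eq:lcentre}. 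The residual check you leave as a ``finite check'' via \eqref{eq:long} is genuine but strictly smaller than the paper's computation (which checks all three slots at once), and it can even be discharged without computing: the identity depends on $F_1,F_4$ only pointwise and linearly, and any prescribed values $F_1(p),F_4(p)$ are attained by some choice with $dF_1=0$, $dF_4+F_1\wedge F_4=0$ (e.g.\ $F_1$ constant and $F_4$ an exponential rescaling of a constant form), i.e.\ by an honest twisted elgebroid, where the identity holds by \eqref{eq:lcentre}; hence it holds for every M-exact pre-elgebroid. In short, what your approach buys is generality and a clear localisation of where the exceptional/M-exact structure is used; what the paper's approach buys is brevity, since Lemma \ref{lem:large} is already in hand and one brute-force expansion settles everything.
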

    \begin{proof}
        The claim follows from a straightforward calculation using formula \eqref{eq:long}.
    \end{proof}
    This immediately implies:
    \begin{cor}\label{cor:preelgtoelg}
        If an M-exact pre-elgebroid locally admits a trivialisation such that the Jacobiator of constant sections vanishes, then it is an exact elgebroid.
    \end{cor}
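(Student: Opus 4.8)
The plan is to deduce the Corollary directly from Lemma \ref{lem:jacobiator}. First I would recall that, by definition, a pre-elgebroid is an elgebroid in which \eqref{eq:jacobi} has been weakened to part \eqref{eq:morphism} of Lemma \ref{lem:basic}; since \eqref{eq:morphism} is itself a consequence of \eqref{eq:jacobi}, an M-exact pre-elgebroid whose Jacobiator $J$ vanishes identically is precisely an M-exact (hence exact) elgebroid. So it suffices to prove $J\equiv 0$.

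Next I would invoke Lemma \ref{lem:jacobiator}: for an M-exact pre-elgebroid the map $J$ is $C^\infty(M)$-linear in all three arguments, and since $J(u,v,w)\in\Gamma(E)$ this says that $J$ is a tensor, $J\in\Gamma(E^*\otimes E^*\otimes E^*\otimes E)$. In particular $J$ vanishes on $M$ as soon as it vanishes on a local frame of $E$ near every point. So fix $p\in M$; by hypothesis there is a neighbourhood $U$ of $p$ and a trivialisation of $E|_U$ whose constant sections $e_\alpha$ form a local frame with $J(e_\alpha,e_\beta,e_\gamma)=0$. Tensoriality then forces $J|_U=0$, and since $p$ was arbitrary, $J\equiv 0$. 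Hence \eqref{eq:jacobi} holds and the structure is an exact elgebroid.

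The argument is essentially formal, so I do not expect a genuine obstacle. The only points requiring a little care are that the tensoriality established in Lemma \ref{lem:jacobiator} holds for \emph{all} M-exact pre-elgebroids (this is what lets one pass from ``vanishing on one frame'' to ``vanishing as a tensor''), and that ``locally admits a trivialisation'' is read as holding in a neighbourhood of every point of $M$, so that the pointwise vanishing of $J$ is genuinely global.
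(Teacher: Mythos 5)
Your argument is correct and is exactly the reasoning the paper intends: the Corollary is stated as an immediate consequence of Lemma \ref{lem:jacobiator}, namely that $C^\infty(M)$-linearity makes the Jacobiator a tensor, so its vanishing on the constant sections of a local frame forces $J\equiv 0$, restoring \eqref{eq:jacobi} and hence the elgebroid structure. Your added care about the local-frame reading of the hypothesis is consistent with how the paper uses the result.
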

    \subsection{Proof of Theorem \ref{thm:class}}\label{ap:proof_class}
    \begin{proof}
        Applying Lemma \ref{lem:large}, we locally get a bracket of the form \eqref{eq:long}. A quick calculation then reveals
        \[[X,[Y,\sigma_2]]-[[X,Y],\sigma_2]-[Y,[X,\sigma_2]]=\sigma_2\wedge i_Yi_X(dF_4+F_1\wedge F_4+dF_1).\]
        Thus axiom \eqref{eq:jacobi} from the definition of an elgebroid requires $dF_1=dF_4+F_1\wedge F_4=0$. Conversely, it is straightforward to check that for any $F_1$ and $F_4$ satisfying these conditions, the axiom is satisfied for all $u,v,w\in\Gamma(E)$.
        
        Taking a different choice of the identification \eqref{eq:identif}, we have
        \[F_1\to F_1+d\psi,\qquad F_4\to e^{-\psi} F_4.\]
        We can therefore locally always achieve $F_1=0$ and $dF_4=0$.
        Finally, note that at a point $p\in M$ any other Lagrangian splitting $TM\to E$ is related to our chosen one via the action of an element from the nilpotent subgroup $\w{3}T^*_pM\oplus \w{6}T^*_pM$ of $\ms E_{n(n)}\times\R^+$. Assuming $F_1=0$, changing the splitting by an element $A_3+A_6\in \Omega^3(M)\oplus\Omega^6(M)$ modifies the bracket by $F_4\to F_4+dA_3$, which means that we can always locally find a splitting such that the bracket has the form \eqref{eq:bracket} with $F_1=F_4=0$.
    \end{proof}
    
    \subsection{Proof of Theorem \ref{thm:action}}\label{ap:proof_action}
    \begin{proof}
    In general, an elgebroid is M-exact iff it is transitive and $\on{Ker}\rho$ is at every point (on $M'$) co-Lagrangian and of codimension $n$. Therefore, if the pullback is to be M-exact, the stabilisers of the action must be co-Lagrangian and of codimension $n$. We will now show that this is the only requirement.
    
    Let us make the identification $\Gamma(E')\cong C^\infty(M')\otimes E$ and similarly for $E'^*$ and $N'$. Equations \eqref{eq:anchor}, \eqref{eq:diff}, and part \eqref{eq:first_slot} of Lemma \ref{lem:basic}, imply that $[\cdot,\cdot]'$ and $\mc D'$ necessarily take the form
    \[[u,v]'=[u,v]+\chi(u)v-\pi(\hat d u)v,\qquad \mc D'n=\mc Dn+(\hat dn)_E,\]
    where $\dh f=\chi^t(df)\in C^\infty(M')\otimes E^*$ for any $f\in C^\infty(M')$.
    One easily verifies that this satisfies \eqref{eq:anchor}, \eqref{eq:sym} and \eqref{eq:diff}, and the bracket $[u,\cdot]$ preserves the $\ms E_{d(d)}\times\R^+$-structure (in the last condition we use Definition \ref{def:grpdata}).
    Finally, for any (i.e.\ not necessarily constant) sections $u,v\in \Gamma(E')$ we have
    \[[\rho'(u),\rho'(v)]'-\rho'([u,v]')=-\chi(((\hat d u\otimes v)_N)_E)=0,\]
    due to the coisotropy.
    Thus $E'$ is a pre-elgebroid. Since the Jacobiator of constant sections coincides with the vanishing Jacobiator on $E$, we can use Corollary \ref{cor:preelgtoelg} to conclude the proof.
\end{proof}
    }
        

\begin{thebibliography}{99}
        \bibitem{Arvanitakis} A. S. Arvanitakis \emph{Brane Wess-Zumino terms from AKSZ and exceptional generalised geometry as an $L_\infty$-algebroid}, Adv. Theor. Math. Phys. 23 (2019) 5, 1159-1213.
        \bibitem{Baraglia} D. Baraglia, \emph{Leibniz algebroids, twistings and exceptional generalized geometry}, Journal of Geometry and Physics 62 (2012), pp. 903-934.
        \bibitem{BCKT} D. S. Berman, M. Cederwall, A. Kleinschmidt and D. C. Thompson, \emph{The gauge structure of generalised diffeomorphisms}, J. High Energ. Phys. 01 (2013) 064.
        \bibitem{BHL} P. du Bosque, F. Hassler, D. Lüst, \emph{Generalized parallelizable spaces from exceptional field theory}, J. High Energ. Phys. 01 (2018) 117.
        \bibitem{CSCW} A. Coimbra, C. Strickland-Constable, D. Waldram, \emph{$E_{d(d)}\times\mathbb{R}^+$ generalised geometry, connections and M theory}, J. High Energ. Phys. 02 (2014) 054.
        \bibitem{CSCW0} A. Coimbra, C. Strickland-Constable, D. Waldram, \emph{Supergravity as generalised geometry I: type II theories}, J. High Energ. Phys. (2011) 091.
        \bibitem{Falk}  F. Hassler, \emph{Poisson-Lie T-Duality in Double Field Theory}, Phys. Lett. B 807 (2020) 135455.
        \bibitem{GF} M. Garcia-Fernandez, \emph{Torsion-free generalized connections and heterotic supergravity}, Commun. Math. Phys., 332 (1) (2014), pp. 89-115.
        \bibitem{GF2} M. Garcia-Fernandez, \emph{Ricci flow, Killing spinors, and T-duality in generalized geometry}, Adv. Math., 350 (2019), pp. 1059-1108.
        \bibitem{HHZ} O. Hohm, C. Hull, B. Zwiebach, \emph{Generalized metric formulation of double field theory}, J. High Energ. Phys. (2010) 8.
        \bibitem{Hull} C. M. Hull, \emph{Generalised Geometry for M-Theory}, J. High Energ. Phys. 0707 (2007) 079.
        \bibitem{Inverso} G. Inverso, \emph{Generalised Scherk-Schwarz reductions from gauged supergravity}, J. High Energ. Phys. 12 (2017) 124.
        \bibitem{JLP} I. Jeon, K. Lee and J. H. Park, \emph{Differential geometry with a projection: Application to double field theory}, J. High Energ. Phys. 04 (2011) 014.
        \bibitem{JV} B. Jur\v co, J. Vysoký, \emph{Courant algebroid connections and string effective actions}, Tohoku Forum for Creativity (2016).
        \bibitem{KS} C. Klim\v cík, P. Ševera, \emph{Dual non-Abelian T-duality and the Drinfeld double}. Phys.Lett. B 351 (1995), 455--462.
        \bibitem{LSCW} K. Lee, C. Strickland-Constable, D. Waldram, \emph{Spheres, generalised parallelisability and consistent truncations}, Fortsch.Phys. 65 (2017) 10-11, 1700048.
        \bibitem{LBM} D. Li-Bland, E. Meinrenken, \emph{Courant algebroids and Poisson geometry}, Int Math Res Notices (2009) 2009 (11): 2106–2145.
        \bibitem{LWX} Z. J. Liu, A. Weinstein, P. Xu, \emph{Manin triples for Lie bialgebroids}, J. Differential Geom.  45 (1997), 547--574.
        \bibitem{MT} E. Malek, D.C. Thompson, \emph{Poisson-Lie U-duality in exceptional field theory}. J. High Energ. Phys. (2020) 58.
        \bibitem{PW} P. P. Pacheco, D. Waldram, \emph{M-theory, exceptional generalised geometry and superpotentials}, J. High Energ. Phys. 09 (2008) 123.
        \bibitem{Pradines} J. Pradines, \emph{Théorie de Lie pour les groupoïdes différentiables. Calcul différentiel dans la catégorie des groupoïdes infinitésimaux}, C. R. Acad. Sci. Paris, Sér. A-B, 264, 1967, A245–A248.
        \bibitem{Sakatani} Y. Sakatani, \emph{$U$-duality extension of Drinfel’d double}, Prog Theor Exp Phys (2020), PTEP 2020 (2020) 2, 023B08.
        \bibitem{Siegel} W. Siegel, \emph{Superspace duality in low-energy superstrings}, Phys. Rev. D 48, 2826 (1993).
        \bibitem{CSC} C. Strickland-Constable, \emph{Subsectors, Dynkin diagrams and new generalised geometries}, J. High Energ. Phys. 08 (2017) 144.
        \bibitem{Severa1} P. Ševera, \emph{Letters to Alan Weinstein about Courant algebroids; Letter 1}, 1998-2000, arXiv:1707.00265.
        \bibitem{Severa2} P. Ševera, \emph{Poisson-Lie T-Duality and Courant Algebroids} Lett.Math.Phys.  Volume: 105   Issue: 12   Pages: 1689--1701.
        \bibitem{SV0} P. Ševera, F. Valach, \emph{Ricci flow, Courant algebroids, and renormalization of Poisson–Lie T-duality}, Lett. Math. Phys. 107 (2017) 10, 1823-1835.
        \bibitem{SV} P. Ševera, F. Valach, \emph{Courant algebroids, Poisson–Lie T-duality, and type II supergravities}, Commun. Math. Phys. 375 (2020) 1, 307-344.
        \bibitem{VKS} G. Valent, C. Klim\v cík, R. Squellari: One loop renormalizability of the Poisson-Lie sigma models, Phys.Lett. B678 (2009) 143-148.
    \end{thebibliography}
\end{document}